\newtheorem{theorem}{Theorem}[section]
\newtheorem{lemma}[theorem]{Lemma}
\newtheorem{proposition}{Proposition}
\theoremstyle{definition}
\newtheorem{definition}[theorem]{Definition}
\newtheorem{remark}{Remark}
\title[A Bayesian level set method for an IMSP in acoustics] 
      {A Bayesian level set method for an inverse medium scattering problem in acoustics}
\author[J. Huang and Z. Deng and L. Xu]{}
\subjclass{Primary: 65N21, 62F15, 78A46.}
 \keywords{Inverse medium scattering problems; Helmholtz equations; Bayesian level set method; Markov chain Monte Carlo (MCMC) methods;
}
 \email{jfhuang00@163.com}
 \email{dengzhl@uestc.edu.cn}
 \email{xul@uestc.edu.cn}
\thanks{$^*$ Corresponding author: xul@uestc.edu.cn}
\begin{document}
\maketitle

\centerline{\scshape Jiangfeng Huang}
\medskip
{\footnotesize
 \centerline{School of Mathematical Sciences, University of Electronic Science and Technology of China,}
   \centerline{Sichuan 611731, China}
} 

\medskip

\centerline{\scshape Zhiliang Deng}
\medskip
{\footnotesize
 \centerline{School of Mathematical Sciences, University of Electronic Science and Technology of China,}
  \centerline{Sichuan 611731, China}
}
\medskip

\centerline{\scshape Liwei Xu$^*$}
\medskip
{\footnotesize
 \centerline{School of Mathematical Sciences, University of Electronic Science and Technology of China,}
  \centerline{Sichuan 611731, China}
}

\bigskip


\begin{abstract}
In this work, we are interested in the determination of the shape of the scatterer for the two dimensional time harmonic inverse medium scattering problems in acoustics. The scatterer is assumed to be a piecewise constant function with a known value inside inhomogeneities, and its shape is represented by the level set functions for which we investigate the information using the Bayesian method. In the Bayesian framework, the solution of the geometric inverse problem is defined as a posterior probability distribution. The well-posedness of the posterior distribution would be discussed, and the Markov chain Monte Carlo (MCMC) methods will be applied to generate samples from the arising posterior distribution. Numerical experiments will be presented to demonstrate the effectiveness of the proposed method.
\end{abstract}

\section{Introduction}

The inverse scattering problems have been extensively investigated because of their great importance and broad applications in radar and sonar, geophysical exploration, medical imaging, and to name a few \cite{Colton2019,Colton2013}. One of the main goals of the inverse scattering problems is to determine the unknown scatterer, such as location, geometry, or material property etc. \cite{Hohage,Natterer,Vogeler}. This kind of inverse scattering problems can be regarded as inverse medium scattering problems (IMSP). The IMSP are ill-posed and nonlinear admitting great theoretical and computational challenges, which have attracted attention of many researchers in past decades.

 There have been many numerical methods being proposed to solve the IMSP \cite{Potthast,Bao3,Bao2012}. Classical methods for the IMSP can be roughly classified into two categories: direct methods and indirect methods. The direct methods mainly recover the support or the shape of the scatterer, such as linear sampling methods \cite{Cheney, Colton2003, Haddar}, multiple signal classification methods \cite{Hou2006, Gruber}, and factorization methods \cite{Kirsch, Kirsch2008}. The indirect methods attempt to determine the unknown representation of the scatterer by applying regularization techniques, including recursive linearization methods \cite{Bao2015, Bao2005}, level set methods \cite{Dorn,Santosa}, and Gauss-Newton methods \cite{Kress2003,Kress1994}. Among these indirect methods, the level set method is a good methodology for the computation of evolving boundaries and interfaces \cite{Dorn2009}. The level set method was originally designed to track propagating interfaces through topological changes \cite{Osher}, and more recently it has been found applications in inverse problems involving obstacles \cite{Santosa, Tai, Ito}.

In addition to the classical methods, another kind of methods lies in the class of statistical methods, and one of those is known as the Bayesian approach. The Bayesian method has attracted considerable attention for inverse problems due to its ability of uncertainty quantification \cite{Dashti, Stuart, Kaipio}. Recently, it has been widely applied to solve the inverse scattering problems \cite{Wang2015, Jia2018, Huang2020, Li2020, Li2019, Liu2019}. In the Bayesian setting, the Gaussian measures are favorable options of the prior distributions, which play central roles in the theory of the Bayesian approach \cite{Stuart}. Samples from the Gaussian priors are generated by solving a related stochastic differential equation or using the Karhunen-Lo\`{e}ve expansion based on the eigenfunctions and eigenvalues of covariance operators of the prior distributions \cite{Iglesias, Dunlop, Jiang, Li2015}. The solution to the Bayesian inversion, i.e. the original problem, is a posterior distribution. To explore the information of the posterior distribution, sampling methods such as the Markov chain Monte Carlo (MCMC) methods are usually employed \cite{Cotter, Feng2018, Chi2001}.

In this work, we are mainly interested in solving the IMSP by the Bayesian level set method, which is a coupling of the Bayesian method and the level set method. Assuming that the scatterer is a piecewise constant function with known values, we characterize the shape of the scatterer by the level set functions. There are few literatures on the numerical solution of the inverse scattering problems by using the Bayesian level set method. In \cite{Iglesias}, it establishes the mathematical foundations of the Bayesian level set method, and its hierarchical extension has been developed in \cite{Dunlop}. In \cite{Deng2,Deng}, the Bayesian method and the ensemble Kalman filter approach based on level set parameterization are introduced for acoustic source identification using multiple frequency information, respectively.  Actually, when the level set method is coupled with the Bayesian approach, there are several advantages for the shape reconstruction. First of all, there are no needs on the implementation of the Fr\'{e}chet derivative of the forward map as well as the corresponding adjoint operator. Secondly, one needs no considering the evolution of the level set functions governed by a Hamilton-Jacobi type equation. Finally, the Bayesian level set method not only can provide with point estimates of the solution,  such as the maximum a posterior (MAP) estimate and the conditional mean (CM) estimate,  but also can provide with a systematic framework for quantifying the uncertainty.  In this paper, we consider the Whittle-Mat\'{e}rn Gaussian random fields as the prior\cite{Roininen,Lindgren}, with which the level sets of the Gaussian random fields have Lebesgue measure zero \cite{Iglesias}. We will also discuss the well-posedness of the posterior distribution based on Bayes' theorem. Applying the MCMC methods, we will show the numerical results via the CM estimates.

The rest of the paper is organized as follows. In Section 2, we simply descible the forward model, and employ the Dirichlet-to-Neumann finite element method (DtN-FEM) as the forward solver \cite{Hsiao,Geng2017}. We discuss the Bayesian level set approach solving the IMSP with the proposed prior and the well-posedness theory of the posterior distribution in Section 3. In Section 4, the numerical results are presented to illustrate the effectiveness of the proposed method.

\section{Direct Scattering Problem}
\label{sec:2}
In this section, we introduce the propagation of time harmonic acoustic waves in two dimensions. The scatterer formed by an inhomogeneous medium is embedded in an infinite homogeneous background medium.
\subsection{A Model Problem}
The scattering problem under consideration is modeled by
\begin{subequations}
\begin{numcases}{}
  \Delta u+k^2(1+q(x))u =0, \quad \text{in}\quad\mathbb{R}^{2}, \label{EQ_1a} \\
  \lim_{r\rightarrow \infty}r^{\frac{1}{2}}(\frac{\partial u^{\rm s}}{\partial r}-iku^{\rm s})=0,\quad r=|x|, \label{EQ_1c}
\end{numcases}
\end{subequations}
where $k>0$ denotes the wavenumber, $u =u^{\rm s}+u^{\rm i}$ is the total field, $u^{\rm i}$ is the plane incident field, and $u^{\rm s}$ is the scattered field which satisfies the Sommerfeld radiation condition (\ref{EQ_1c}) uniformly in all directions. $q(x)>-1$ is assumed to be a piecewise constant function describing the scatterer. Moreover, we assume that the scatterer has a compact support contained in $B_{R}:=\{x \in\mathbb{R}^{2}:|x|<R\}$, which is bounded by an artificial boundary $\Gamma_{R}:=\{x\in\mathbb{R}^{2}:|x|=R\}$ with $R$ being sufficiently large to enclose the scatterer inside (see Fig. \ref{f1}). In particular, considering the plane incident wave $u^{\rm i}=e^{ikx\cdot \textbf{d}}$ with the incident direction $\textbf{d}\in \{x\in\mathbb{R}^{2}:|x|=1\}$, we can write the equation (\ref{EQ_1a}) into
\begin{equation}\label{eq3}
\Delta u^{\rm s}+k^{2}(1+q(x))u^{\rm s}=-k^{2}q(x)u^{\rm i}\quad \text{in}\quad \mathbb{R}^{2}.
\end{equation}
\begin{figure}
  \centering
  \includegraphics[width=1.6in,height=1.4in]{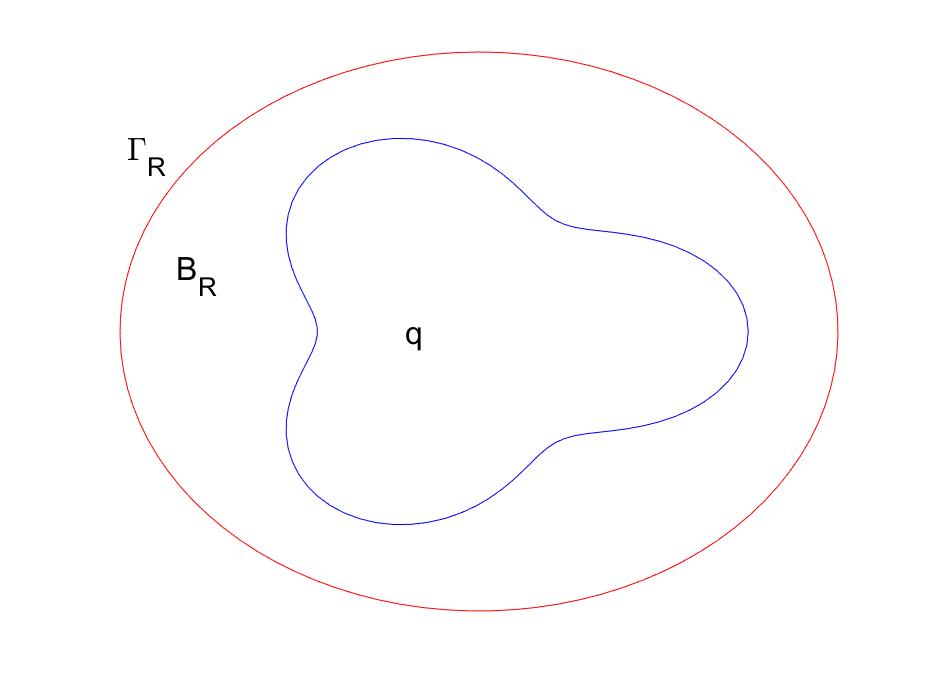}\\
  \caption{The geometry setting for the scattering problem}\label{f1}
\end{figure}
\subsection{Equivalent Formulation}

In the following, let $L^{2}(B_{R})$ be the usual Hilbert space of all square integrable functions, and let $H^{1}(B_{R})$ be the Sobolev space equipped with the norm
$$\|u\|_{H^{1}(B_{R})}=(\|u
\|^{2}_{L^{2}(B_{R})}+\|\nabla u\|^{2}_{L^{2}(B_{R})})^{\frac{1}{2}}.$$
Define the trace space $H^{s}(\Gamma_{R})$, $s\in \mathbb{R}$, as
$$H^{s}(\Gamma_{R})=\{u\in L^{2}(\Gamma_{R})\,|\,\|u\|_{H^{s}(\Gamma_{R})}<\infty\}$$
equipped with the norm
$$\|u\|_{H^{s}(\Gamma_{R})}=(\frac{|a_{0}|^{2}}{2}+\sum_{n\in\mathbb{Z}}(1+n^{2})^{s}(|a_{n}|^{2}+|b_{n}|^{2}))^{\frac{1}{2}},$$
where $a_{n}$ and $b_{n}$ are Fourier coefficients of $u\in H^{s}(\Gamma_{R}) $.

In the domain $\mathbb{R}^{2}\backslash \overline{B}_{R}$, the solution of equation (\ref{eq3}) has the form in the polar coordinates as follows \cite{Colton2019}:
\begin{equation}\label{eq4}
  u^{\rm s}(r,\theta)=\sum_{n\in \mathbb{Z}}\frac{H_{n}^{(1)}(kr)}{H_{n}^{(1)}(kR)}\hat{u}_{n}^{\rm s}e^{in\theta},
\end{equation}
where
\begin{equation*}
\hat{u}_{n}^{\rm s}=(2\pi)^{-1}\int_{0}^{2\pi}u^{\rm s}(R,\theta)e^{-in\theta}d\theta,
\end{equation*}
and $H_{n}^{(1)}$ is the Hankel function of the first kind with order $n$.

On the artificial boundary $\Gamma_{R}$, we can define the Dirichlet-to-Neumann (DtN) operator $T: H^{1/2}(\Gamma_{R})\rightarrow  H^{-1/2}(\Gamma_{R})$ as follows: for any $u^{\rm s}\in  H^{1/2}(\Gamma_{R})$,
\begin{equation}\label{eq4u}
  \frac{\partial u^{\rm s}}{\partial \nu}\mid_{\Gamma_{R}}=Tu^{\rm s}:=k\sum_{n\in \mathbb{Z}}\frac{H_{n}^{(1)'}(kR)}{ H_{n}^{(1)}(kR)}\hat{u}^{\rm s}_{n}e^{in\theta},
\end{equation}
where $\nu$ is the unit outward normal to $\Gamma_{R}$ (\cite{Bao2015}). Alternatively, the DtN operator $T$ can be expressed as
\begin{equation}\label{eq5}
\frac{\partial u^{\rm s}}{\partial \nu}\mid_{\Gamma_{R}}=Tu^{\rm s}:=\sum_{n=0}^{\infty}\frac{kH_{n}^{(1)'}(kR)}{2\pi H_{n}^{(1)}(kR)}\int_{0}^{2\pi}u^{\rm s}(R,\varphi)\cos(n(\theta-\varphi))d\varphi.
\end{equation}

The original scattering problem (\ref{EQ_1a})-(\ref{EQ_1c}) defined on $\mathbb{R}^{2}$ can be equivalently reduced to the following problem defined on the bounded domain \cite{Bao3},
\begin{subequations}\label{EQ_2}
\begin{numcases}{}
 \Delta u^{\rm s}+k^{2}(1+q(x))u^{\rm s} =-k^{2}q(x)u^{\rm i} \quad \text{in}\quad B_{R},   \\
 \frac{\partial u^{\rm s}}{\partial \nu}=Tu^{\rm s} \quad \text{on}\quad \Gamma_{R}.
\end{numcases}
\end{subequations}
Then, we have the weak formulation of the boundary value problem (\ref{EQ_2}): find $u^{\rm s}\in H^{1}(B_{R})$ such that
\begin{equation}\label{eq8}
  A(u^{\rm s},v)=\ell(v)\quad \forall v\in H^{1}(B_{R}),
\end{equation}
where the bilinear form $A(\cdot,\cdot): H^{1}(B_{R})\times H^{1}(B_{R})\to \mathbb{C}$ is defined by
\begin{equation*}
  A(u^{\rm s},v)=\int_{B_{R}}\nabla u^{\rm s}\cdot \nabla \overline{v}dx-k^{2}\int_{B_{R}}(1+q(x)u^{\rm s}\overline{v}dx-\int_{\Gamma_{R}}Tu^{\rm s}\overline{v}dS,
\end{equation*}
and the linear functional $\ell(\cdot): H^{1}(B_{R})\to\mathbb{C}$ is defined by
\begin{equation*}
\ell(v)=k^{2}\int_{B_{R}}q(x)u^{\rm i}\overline{v}dx.
\end{equation*}

Finally, we point out that given the incident field $u^{\rm i}$ and the scatterer $q(x)$, the direct scattering problem is to determine the scattered field $u^{\rm s}$.
\section{Bayesian Level Set Inversion}
\label{sec:3}
In this section, we adopt level set functions to reformulate the inverse medium scattering problem as a shape reconstruction problem.
\subsection{The Inverse Problem}
\begin{definition}
The scatterer $q(x)\in L^{\infty}(B_{R})$ is said to be admissible if there exists a compact domain $D \subset\subset B_{R}$ such that
\begin{numcases}{q(x)=}
b, & \text{for} $x\in D$,\nonumber\\
0, &  \text{for} $x\in \mathbb{R}^{2}\backslash D$, \nonumber
\end{numcases}
where $b>0$ is a constant. The set of all such scatterers will be denoted by $\mathcal{A}(B_{R})$.
\end{definition}

For the problem (\ref{EQ_2}), we assume that $M$ different wavenumbers $k:=k_{m}, m=1,\cdots, M$, are given. For each of these wavenumbers, there correspond to the incident waves $u_{mj}^{\rm i}=e^{ik_{m}x\cdot \textbf{d}_{j}}, j=1,\cdots, J$. Thus, for a given wavenumber $k_{m}$ and a given incident wave  $u_{mj}^{\rm i}$, we define the forward operator $G_{mj}: X\rightarrow Y$ by $u_{mj}^{\rm s}=G_{mj}(q(x))$, where $q(x)\in \mathcal{A}(B_{R}):=X$, $u_{mj}^{\rm s}\in H^{1}(B_{R}):=Y$. On the other hand, we denote the observation data with noise by
\begin{equation}\label{3-1}
  y_{mj}=O_{mj}\circ G_{mj}(q(x))+\eta_{mj},
\end{equation}
where $O_{mj}: Y\to \mathbb{C}^{N}$ denotes the collection of $N$ linear functionals on $Y$, $y_{mj}\in \mathbb{C}^{N}$, and $\eta_{mj}\sim \mathcal{N}(0,\Sigma_{0})$ is the additive Gaussian noise with covariance matrix $\Sigma_{0}\in \mathbb{R}^{N\times N}$. Gathering all the observations, one can rewrite (\ref{3-1}) as
\begin{equation}\label{eq9}
  y=O\circ G(q(x))+ \eta,
\end{equation}
where $y:=(y_{11},\cdots,y_{MJ})^{T}\in \mathbb{C}^{MJN}:=\mathcal{Y}$, $O\circ G := (u_{11}^{\rm s}(x_{1}),\cdots, u_{MJ}^{\rm s} (x_{N}))^{T}\in \mathbb{C}^{MJN}$ denotes the noise-free data with observation points $\{x_{n}\}_{n=1}^{N}\subseteq \Gamma_{R}$, and $\eta :=(\eta_{11},\cdots, \eta_{MJ})\sim \mathcal{N}(0,\Sigma)$ with covariance matrix $\Sigma=diag(\Sigma_{0},\cdots, \Sigma_{0})\in \mathbb{R}^{MJN\times MJN}$.

\subsection{Level Set Parameterization}
The scatterer $q(x)$ is characterized by
\begin{equation}\label{eq10}
  q(x)=\sum_{i=1}^{L}b_{i}\mathbb I_{B_{i}}(x),
\end{equation}
where $\{B_{i}\}_{i=1}^{L}$ are $L$ subdomains such that $B_{i}\cap B_{j}=\emptyset,\forall i\neq j$ and $\cup_{i=1}^{L}\overline{B}_{i}=\overline{B}_{R}$, $\mathbb I$ denotes the indicator function of a set, and the $\{b_{i}\}_{i=1}^{L}$ are known constants, $b_{i}\in \{b,0\}$. In this setting, the unknown scatterer would be determined by the domains $B_{i}, i=1, \cdots, L$. It is natural to make use of the level set representation of the domains through a continuous real-valued function $\phi: B_{R}\rightarrow \mathbb{R}$. To this purpose, we define $B_{i}\subseteq B_{R}$ by the level set function $\phi$,
\begin{equation}\label{eq11}
  B_{i}=\{x\in B_{R}|c_{i-1}\leq \phi(x)<c_{i}\},\quad i=1,\cdots L,
\end{equation}
where $c_{i}$ are constants with $-\infty=c_{0}<c_{1}<\cdots<c_{L}=\infty$, $i\in \mathbb{N}$. We define the level sets as
\begin{equation}\label{EQ}
  B_{i}^{0}=(\bigcup_{j=1}^{i}\overline{B}_{j})\cap \overline{B}_{i+1}=\{x\in B_{R}|\phi(x)=c_{i}\},\quad i=1,\cdots L-1.
\end{equation}
It is evident that the same domain $B_{i}$ can be represented by different level set functions $\phi_{1}$ and $\phi_{2}$, and however different domains can not be determined by the same level set representation. Therefore, we can use the level set representation to uniquely specify the domain $B_{i}$ by an associated level set function, $i=1, \cdots, L$. Let $\mathcal{X}=C(\overline{B}_{R},\mathbb{R})$, $F:\mathcal{X}\rightarrow X$ is the level set map described by
\begin{equation}\label{eq12}
  (F\phi)(x)\rightarrow q(x)=\sum_{i=1}^{L}b_{i}\mathbb I_{B_{i}}(x).
\end{equation}
Then we modify our operator $O\circ G$ into $\mathcal{G}=O\circ G\circ F: \mathcal{X}\rightarrow \mathcal{Y}$. As a result, the inverse problem can be reformulated as:  for given $y$, find $\phi$ such that
\begin{equation}\label{eq13s}
  y=\mathcal{G}(\phi)+\eta.
\end{equation}

\subsection{Bayesian Inference}
In the Bayesian framework, all quantities in (\ref{eq13s}) are viewed as random variables. Since it is assumed that $\eta\in \mathbb{R}^{MJN}$ is additive Gaussian, it is typically straightforward to write the likelihood function, i.e. the probability density of $y$ given $\phi$,
 \begin{equation}\label{15p}
  \pi(y|\phi)\propto\exp(-\frac{1}{2}|\mathcal{G}(\phi)-y|_{\Sigma}^{2}),
\end{equation}
where $|\cdot|_{\Sigma}:=|\Sigma^{-\frac{1}{2}}\cdot|$ denotes the weighted norm in terms of the Euclidean norm $|\cdot |$. In the following, we denote $\frac{1}{2}|\mathcal{G}(\phi)-y|_{\Sigma}^{2}$ by the potential $\Phi(\phi;y)$. We assume that the prior measure of the unknown $\phi$ is $\mu_{0}$, the posterior measure $\mu_{y}$ is represented as the Radon-Nikodym derivative
with respect to the prior measure $\mu_{0}$:
\begin{equation}\label{15pp}
  \frac{d\mu_{y}}{d\mu_{0}}(\phi)=\frac{1}{Z} \exp(-\Phi(\phi;y)).
\end{equation}
where $Z=\int_{\mathcal{X}} \pi(y|\phi)\mu_{0}(d\phi)$ is a normalization constant. The equation (\ref{15pp}) can be viewed as the Bayes' rule in the infinite-dimensional setting.
\subsubsection{Whittle-Mat\'{e}rn Gaussian Random Field Prior}
We now introduce Gaussian prior of Whittle-Mat\'{e}rn type with covariance \cite{Roininen}
\begin{equation}\label{eq15}
 c(x,y)=\sigma^{2}\frac{2^{2-\alpha}}{\Gamma(\alpha-1)}(\frac{|x-y|}{l})^{\alpha-1}K_{\alpha-1}(\frac{|x-y|}{l}), \quad x,y \in \mathbb{R}^{2},
\end{equation}
where $K_{\alpha-1}$ is the modified Bessel function of the second kind of order $\alpha-1$, $\sigma^{2}>0$ is the variance, $l>0$ is the characteristic length scale, and $\Gamma(\cdot)$ is the Gamma function. We generate the samples from the Whittle-Mat\'{e}rn prior by solving a stochastic partial differential equation
\begin{equation}\label{eq16}
 (I-l^{2}\Delta)^{\frac{\alpha}{2}}\phi=l\sqrt{\varsigma}\xi,
\end{equation}
where $(I-l^{2}\Delta)^{\frac{\alpha}{2}}$ is a pseudo-difference operator defined by its Fourier transform, $\xi$ is a Gaussian white noise, and $\varsigma=\sigma^{2}\frac{4\pi\Gamma(\alpha)}{\Gamma(\alpha-1)}$ is a constant.
Set $\tau=l^{-1}>0$, we have the stochastic partial differential equation
\begin{equation}\label{eq17}
  \mathcal{C}_{\alpha,\tau}^{-\frac{1}{2}}\phi=\xi,
\end{equation}
where $\mathcal{C}_{\alpha,\tau}=\varsigma\tau^{2\alpha-2}(\tau^{2}I-\Delta)^{-\alpha}$ denotes the covariance operator of prior distribution $\mu_{0}$, $\alpha$ controls the regularity of the samples, and $\tau$ represents the inverse length scale of the samples. In what follows, assume that $\mathcal{A}:= \Delta$ is Laplacian with Neumann boundary conditions on $B_{R}$, and its domain is given by
\begin{equation}\label{eq19}
  B_{R}(\mathcal{A}):=\{\phi:B_{R}\rightarrow \mathbb{R}\,|\,\phi\in H^{2}(B_{R};\mathbb{R}), \frac{\partial \phi}{\partial \nu}=0\;\text{on}\; \partial B_{R}\}.
\end{equation}
In Fig. \ref{prior1} and Fig. \ref{prior2}, we display random samples obtained from (\ref{eq17}) regarding to different values of the inverse length scale $\tau$ and the regularity $\alpha$. These samples are constructed in the domain $B_{R}$ with $R=1$.
\begin{figure}
  \centering
  \begin{subfigure}
  \centering
  \includegraphics[width=1.6in,height=1.4in]{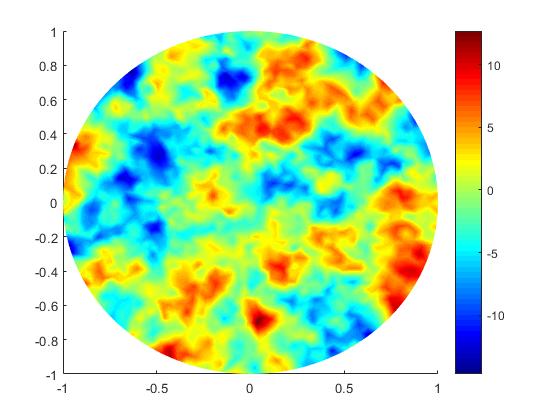}
  \end{subfigure}
  \begin{subfigure}
  \centering
  \includegraphics[width=1.6in,height=1.4in]{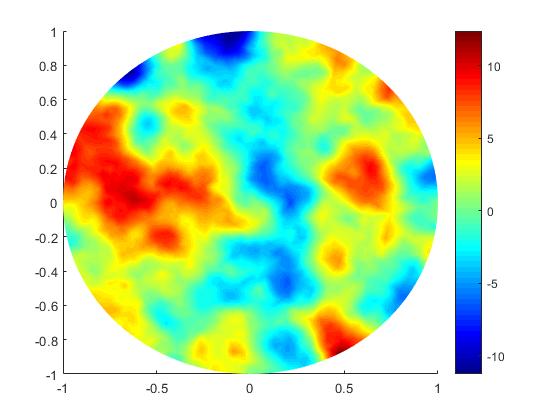}
  \end{subfigure}
  \begin{subfigure}
  \centering
  \includegraphics[width=1.6in,height=1.4in]{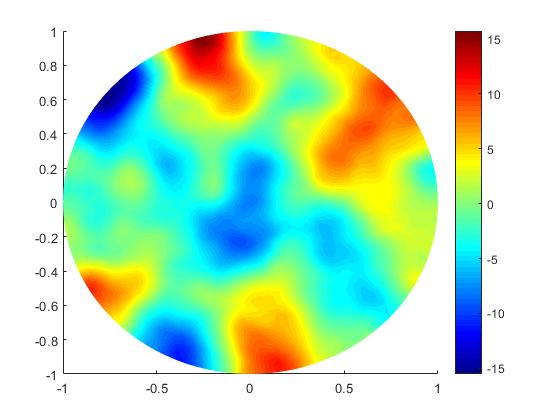}\\
  \end{subfigure}
  \caption{Samples from the prior with $\alpha=2,3,4$, for $\tau=10$.}
  \label{prior1}
\end{figure}

\begin{figure}
  \centering
  \begin{subfigure}
  \centering
  \includegraphics[width=1.6in,height=1.4in]{prior_3_01.jpg}
  \end{subfigure}
  \begin{subfigure}
  \centering
  \includegraphics[width=1.6in,height=1.4in]{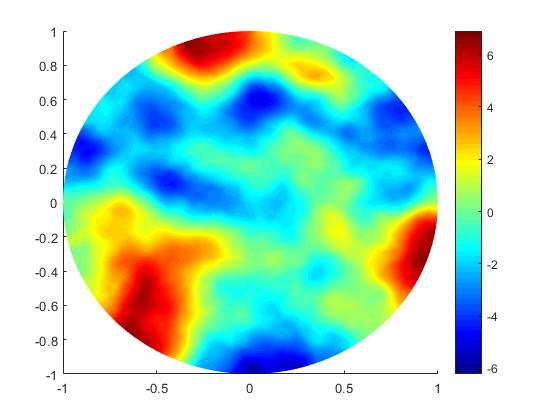}
  \end{subfigure}
  \begin{subfigure}
  \centering
  \includegraphics[width=1.6in,height=1.4in]{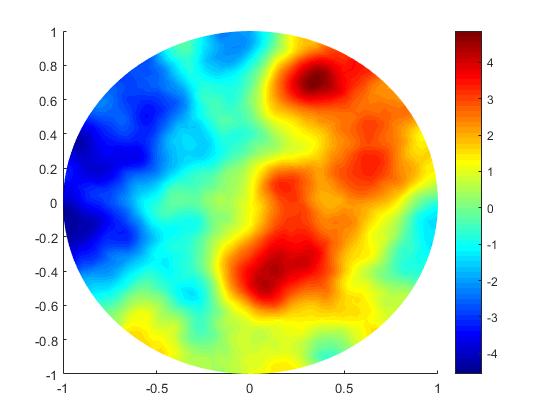}\\
  \end{subfigure}
  \caption{Samples from the prior with inverse length scale $\tau=10,\frac{20}{3},5$, for $\alpha=3$.}
  \label{prior2}
\end{figure}
\subsubsection{Well-Posedness of the Posterior Distribution}
We now discuss the well-posedness of the posterior distribution for the IMSP. It is clear that the level set map is discontinuous, and due to this fact, we get that the map $\mathcal{G}$ is discontinuous. Although the well-posedness of Bayesian inversion relies on the continuity of the map $\mathcal{G}$, it demonstrates (\cite{Iglesias}) that the discontinuity set is a probability zero event under the Gaussian prior. As a result, $F$ will be almost surely continuous under the prior, and it will be given in the following Lemma \ref{prop2}. Thus, we are able to get the measurability required in the Bayes' theorem \cite{Stuart}. Furthermore, we need to verify some regularity properties of the potential $\Phi(\phi; y)$ which satisfy the assumptions of the Bayes' theorem. Prior to the presentation, we define a complete probability space $(\mathcal{X}, \Xi, \mu_{0})$ for the unknown $\phi$, where $\mathcal{X}$ denotes a separable Banach space, and $\Xi$ is the $\sigma$-algebra.

\begin{lemma}\label{prop2}
Define the map $F:\mathcal{X} \rightarrow X$ given by $(\ref{eq12})$. Let $\phi\in \mathcal{X}$ be such that $m(B_{i}^{0})=0$, for $i=1,\cdots,L-1$. Assume that $\{\phi_{\epsilon}\}_{\epsilon>0}\subseteq C(\overline{B}_{R})$ denotes an approximate sequence of level set functions such that $\|\phi_{\epsilon}-\phi\|_{\infty}\rightarrow 0$. Then $F(\phi_{\epsilon})\rightarrow F(\phi)$ in measure.
\end{lemma}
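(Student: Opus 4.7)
The plan is to prove convergence in measure directly by identifying the (necessarily small) set on which $F(\phi_\ep)$ and $F(\phi)$ disagree, and showing its Lebesgue measure vanishes. Since both $F(\phi_\ep)$ and $F(\phi)$ take values in the finite set $\{b_i\}_{i=1}^{L}$, the convergence $F(\phi_\ep)\to F(\phi)$ in measure is equivalent to $m(E_\ep)\to 0$, where
\[
E_\ep:=\{x\in \overline{B}_R \ :\ F(\phi_\ep)(x)\neq F(\phi)(x)\}.
\]

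The first step is to localize the mismatch using the uniform bound $\eta_\ep:=\|\phi_\ep-\phi\|_\infty$. Introduce the thickened level sets
\[
N_i^{\eta}:=\{x\in \overline{B}_R \ :\ |\phi(x)-c_i|\leq \eta\}, \qquad i=1,\dots,L-1,
\]
and show the inclusion $E_\ep\subseteq \bigcup_{i=1}^{L-1} N_i^{\eta_\ep}$. Indeed, if $x$ lies outside every $N_i^{\eta_\ep}$, then $\phi(x)$ is more than $\eta_\ep$ away from each threshold $c_i$, so there is a unique index $j$ with $c_{j-1}+\eta_\ep<\phi(x)<c_j-\eta_\ep$ (with the obvious convention $c_0=-\infty$, $c_L=+\infty$). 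The bound $|\phi_\ep(x)-\phi(x)|\leq \eta_\ep$ then forces $\phi_\ep(x)\in [c_{j-1},c_j)$ as well, so both $\phi_\ep$ and $\phi$ place $x$ in the same cell $B_j$ and hence $F(\phi_\ep)(x)=b_j=F(\phi)(x)$.

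The second step is to conclude by measure-theoretic continuity. As $\ep\downarrow 0$, $\eta_\ep\downarrow 0$, so for small enough $\ep$ the sets $N_i^{\eta_\ep}$ form a decreasing family, whose intersection over $\eta>0$ is exactly $B_i^0=\{\phi=c_i\}$. Since $B_R$ is bounded, $m(\overline{B}_R)<\infty$, and continuity of Lebesgue measure from above together with the hypothesis $m(B_i^0)=0$ gives $m(N_i^{\eta_\ep})\to 0$ for each $i=1,\dots,L-1$. Summing over the finitely many indices and invoking the inclusion from Step~1 yields $m(E_\ep)\to 0$, which is the desired convergence in measure.

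The main subtlety, and really the only place where the hypothesis enters, is the half-open nature of the intervals $[c_{i-1},c_i)$: a point with $\phi(x)=c_i$ is placed in $B_{i+1}$, while a nearby $\phi_\ep(x)$ just below $c_i$ would be placed in $B_i$, causing a discrepancy. This is precisely the content of the measure-zero hypothesis on $B_i^0$, and the $N_i^{\eta}$-thickening in Step~1 absorbs it cleanly. I do not anticipate any serious obstacle beyond correctly book-keeping this half-open convention and verifying the finitely many cases $j=1$ and $j=L$ at the extremes; no deeper approximation theorem (Egorov, Lusin, etc.) is required.
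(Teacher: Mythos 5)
Your proof is correct. The paper itself omits the argument, deferring to Proposition 3.5 of the cited reference of Dunlop et al., and your thickened-level-set argument --- bounding the disagreement set $E_\epsilon$ by $\bigcup_{i=1}^{L-1}\{x:|\phi(x)-c_i|\leq\|\phi_\epsilon-\phi\|_\infty\}$ and applying continuity of Lebesgue measure from above together with $m(B_i^0)=0$ --- is essentially the same proof as in that reference, with the half-open-interval bookkeeping handled correctly.
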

\begin{remark}
Here $m(B_{i}^{0})=0$ denotes the Lebesgue measure of the set $B_{i}^{0}$. The proof of this lemma is almost identical to that of Proposition 3.5 in \cite{Dunlop2}, and we omit the details here.
\end{remark}
\begin{proposition}\label{prop1}
The potential $\Phi(\phi; y)$  and probability measure $\mu_{0}$ on the measure space $(\mathcal{X},\Xi)$ satisfy the following properties:\\
(1) for every $r>0$, there is a $K_{1}=K_{1}(r)$ such that, for all $\phi\in \mathcal{X}$ and $y\in \mathcal{Y}$ with $|y|_{\Sigma}<r$,
\begin{equation}\label{eq20}
  0\leq \Phi(\phi;y)\leq K_{1};
\end{equation}
(2) for any fixed $y\in \mathcal{Y}$, $\Phi(\cdot;y): \mathcal{X}\rightarrow\mathbb{R}$, is continuous $\mu_{0}$-almost surely on the probability space $(\mathcal{X},\Xi,\mu_{0})$;\\
(3) for every $r>0$, there exists a $K_{2}=K_{2}(r)$ such that, for all $\phi\in \mathcal{X}$, and $y_{1},y_{2}\in \mathcal{Y}$ with $\max\{|y_{1}|_{\Sigma}, |y_{2}|_{\Sigma}\}<r$,
\begin{equation}\label{eq21}
  |\Phi(\phi;y_{1})-\Phi(\phi;y_{2})|\leq K_{2}|y_{1}-y_{2}|_{\Sigma}.
\end{equation}
\end{proposition}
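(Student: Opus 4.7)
The plan is to verify the three properties in sequence, all of which ultimately reduce to a uniform bound on the forward map $\mathcal{G}$ on the support of $\mu_{0}$ together with the almost-sure continuity supplied by Lemma \ref{prop2}.

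For (1), I would first observe that $\Phi(\phi;y) \geq 0$ is immediate from the definition. For the upper bound, the key observation is that every admissible scatterer produced by $F$ has a uniform $L^{\infty}$ bound, namely $\|q\|_{L^{\infty}(B_{R})} \leq b$, independent of $\phi$. Plugging this into the weak formulation \eqref{eq8} and invoking the standard Fredholm/inf-sup stability for the DtN-FEM formulation of the Helmholtz problem gives a bound $\|u^{\rm s}_{mj}\|_{H^{1}(B_{R})} \leq C(k_{m}, b, R)\|u^{\rm i}_{mj}\|_{L^{2}(B_{R})}$ with $C$ independent of $\phi$. Because $O$ is a fixed bounded linear observation operator and there are only finitely many wavenumbers and incident directions, there is a constant $M_{0}$ with $|\mathcal{G}(\phi)|_{\Sigma} \leq M_{0}$ for every $\phi \in \mathcal{X}$. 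The triangle inequality then gives $\Phi(\phi;y) \leq \tfrac{1}{2}(M_{0}+r)^{2} =: K_{1}(r)$.

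For (2), I would use the fact that the Whittle–Mat\'ern prior $\mu_{0}$ places zero mass on the event that any level set $\{\phi = c_{i}\}$ has positive Lebesgue measure, since $\phi$ is almost surely a nonconstant Gaussian random field. On this full-measure set, Lemma \ref{prop2} yields $F(\phi_{\epsilon}) \to F(\phi)$ in measure whenever $\phi_{\epsilon} \to \phi$ uniformly. Combined with the uniform $L^{\infty}$ bound, convergence in measure upgrades to $L^{p}(B_{R})$ convergence for every $p<\infty$ by dominated convergence. I would then pass to the limit in the weak formulation \eqref{eq8}: the $\nabla u^{\rm s} \cdot \nabla \bar v$ and DtN terms are independent of $q$, while the $q u^{\rm s} \bar v$ and $q u^{\rm i} \bar v$ terms are continuous in $q$ under $L^{p}$ convergence. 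A standard subsequence-uniqueness argument (using uniqueness of the limiting Helmholtz problem) gives $u^{\rm s}_{\epsilon} \to u^{\rm s}$ in $H^{1}(B_{R})$, hence in the observations via continuity of $O$. This shows $\Phi(\cdot;y)$ is continuous $\mu_{0}$-a.s.

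Part (3) is a direct computation from the polarization identity: writing
\begin{equation*}
\Phi(\phi;y_{1})-\Phi(\phi;y_{2}) = \tfrac{1}{2}\langle y_{2}-y_{1},\,2\mathcal{G}(\phi)-y_{1}-y_{2}\rangle_{\Sigma},
\end{equation*}
the Cauchy–Schwarz inequality combined with the uniform bound $|\mathcal{G}(\phi)|_{\Sigma} \leq M_{0}$ from step (1) and $|y_{i}|_{\Sigma}<r$ yields $|\Phi(\phi;y_{1})-\Phi(\phi;y_{2})| \leq (M_{0}+r)\,|y_{1}-y_{2}|_{\Sigma}$, so $K_{2}(r) := M_{0}+r$ works.

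The main obstacle is the forward-stability ingredient that underpins both (1) and (2): one has to show that the solution operator of the Helmholtz problem \eqref{EQ_2} is stable with a $\phi$-independent constant and is continuous with respect to $L^{p}$ (and in particular measure-convergent) perturbations of the coefficient $q$ within the class $\mathcal{A}(B_{R})$. For real wavenumbers this is not automatic in the absence of a positivity assumption, so I would rely on the Fredholm alternative applied to \eqref{EQ_2} together with the uniform $L^{\infty}$ bound on $q$ to obtain both the uniform bound and the continuous dependence; everything else in the proposition is straightforward functional-analytic bookkeeping.
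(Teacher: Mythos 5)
Your proposal is correct, and for parts (1) and (3) it is essentially identical to the paper's argument: the uniform bound $|\mathcal{G}(\phi)|_{\Sigma}\leq C$ coming from the a priori estimate $\|G(q)\|_{H^{1}(B_{R})}\leq C\|q\|_{L^{\infty}(B_{R})}\|u^{\rm i}\|_{L^{2}(B_{R})}$ together with the uniform bound $\|F(\phi)\|_{\infty}\leq b$, followed by the triangle inequality for (1) and the polarization/Cauchy--Schwarz computation for (3). The interesting difference is in part (2). The paper argues by citing the Lipschitz continuity of $G$ with respect to the $L^{\infty}$ norm of $q$ and then invoking Lemma \ref{prop2}, which only delivers $F(\phi_{\epsilon})\to F(\phi)$ \emph{in measure}; as written this does not close, since piecewise-constant fields converging in measure do not converge in $L^{\infty}$ (the sup-norm distance between two distinct indicator-type scatterers is always $b$). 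You instead upgrade convergence in measure to $L^{p}$ convergence via the uniform $L^{\infty}$ bound and dominated convergence, and then pass to the limit in the weak formulation \eqref{eq8} using a subsequence--uniqueness argument. This is the more careful route (it is the one taken in the Iglesias--Lu--Stuart framework the paper relies on) and it actually repairs the gap in the paper's composition argument; the price is that you must establish continuity of the solution operator with respect to $L^{p}$ perturbations of $q$, which you correctly identify as the real analytic content and propose to handle via the Fredholm alternative and the uniform stability constant. Both you and the paper ultimately lean on the same unproved uniformity of the stability constant over the admissible class, so your treatment is at least as rigorous as the original and strictly more complete on point (2).
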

\begin{proof}

(1) From the problem (\ref{EQ_2}), it can be observed that the map $G$ is nonlinear with respect to $q$. We know that the following estimate holds \cite{Bao3}
\begin{equation}\label{eqt1}
 \|u^{\rm s}\|_{H^{1}(B_{R})}= \|G(q)\|_{H^{1}(B_{R})}\leq C\|q\|_{L^{\infty}(B_{R})}\|u^{\rm i}\|_{L^{2}(B_{R})}.
\end{equation}
$O: Y\rightarrow \mathcal{Y}$ is the bounded linear map, and $\|F(\phi)\|_{\infty}$ is bounded uniformly over $\phi\in \mathcal{X}$. Hence
\begin{equation}\label{eqt4}
  |\mathcal{G}(\phi)|_{\Sigma}=|O\circ G\circ F(\phi)|_{\Sigma}\leq C.
\end{equation}
Note that
\begin{equation}\label{eqt2}
  \Phi(\phi;y)=\frac{1}{2}|y-\mathcal{G}(\phi)|_{\Sigma}^{2}\leq|y|^{2}_{\Sigma}+|\mathcal{G}(\phi)|^{2}_{\Sigma}.
\end{equation}
Then, for any $y\in \mathcal{Y}$ with $|y|_{\Sigma}<r$, we obtain the bound
\begin{equation}\label{eqt3}
  \Phi(\phi;y)\leq C(1+r^{2})=: K_{1}.
\end{equation}

(2) It is known that the map $G:X\rightarrow Y$ is continuous \cite{Bao3}, i.e.
\begin{equation*}
  \|G(q_{1})-G(q_{2})\|_{H^{1}(B_{R})}\leq C\|q_{1}-q_{2}\|_{L^{\infty}(B_{R})}\|u^{\rm i}\|_{L^{2}(B_{R})},
\end{equation*}
and $O: Y\rightarrow \mathcal{Y}$ is the linear continuous map. Therefore, the discontinuity set of $\mathcal{G}$ is determined by the discontinuity set of the level set map $F$. However, since we assume that $\phi\sim \mu_{0}$ is a Gaussian measure, it follows from the Proposition 2.8 in \cite{Iglesias} that $m(B_{i}^{0})=0$, $\mu_{0}$-almost surly, $i=1,\cdots,L-1$. In brief, the level sets of the Gaussian random field have Lebesgue measure zero. By Lemma \ref{prop2}, we can obtain that $\|\phi_{\epsilon}-\phi\|_{\infty}\rightarrow 0$ implies that $F(\phi_{\epsilon})\rightarrow F(\phi)$ in measure. Therefore, $\Phi(\cdot;y)$ is continuous $\mu_{0}$-almost surely .

(3) Let $\phi\in \mathcal{X}$ and $y_{1}, y_{2} \in \mathcal{Y}$  with $\max\{|y_{1}|_{\Sigma}, |y_{2}|_{\Sigma}\}<r$. It follows that
\begin{align}\label{eq22}
   |\Phi(\phi;y_{1})-\Phi(\phi;y_{2})| & = |\frac{1}{2}|y_{1}-\mathcal{G}(\phi)|_{\Sigma}^{2}-\frac{1}{2}|y_{2}-\mathcal{G}(\phi)|_{\Sigma}^{2}| \nonumber \\
                                 & =\frac{1}{2}|\langle y_{1}-y_{2}, y_{1}+y_{2}-2\mathcal{G}(\phi)\rangle_{\Sigma}| \nonumber\\
                                 & \leq (|y_{1}|_{\Sigma}+|y_{2}|_{\Sigma}+2|\mathcal{G}(\phi)|_{\Sigma})|y_{1}-y_{2}|_{\Sigma} \nonumber\\
                                 & \leq (r+2|\mathcal{G}(\phi)|_{\Sigma})|y_{1}-y_{2}|_{\Sigma}  \nonumber\\
                                 & =: K_{2}|y_{1}-y_{2}|_{\Sigma}.\nonumber
\end{align}
\end{proof}
\begin{definition}
Let $\nu_{0}$ be a common reference measure. The Hellinger distance between $\mu$ and $\mu'$ with common reference measure $\nu_{0}$ is
\begin{equation}\label{eq25}
  d_{Hell}(\mu,\mu')=\sqrt{\frac{1}{2}\int(\sqrt{\frac{d\mu}{d\nu_{0}}}-\sqrt{\frac{d\mu'}{d\nu_{0}}})^{2}d\nu_{0}}.
\end{equation}
\end{definition}
\begin{theorem}\label{th2}
Assume that $\phi\sim \mu_{0}:=\mathcal{N}(0,\mathcal{C}_{\alpha,\tau})$. Then the following results hold:\\
(1) The posterior measure $\mu_{y}$ exists and is absolutely continuous with respect to $\mu_{0}$, i.e. $\mu_{y}\ll \mu_{0}$, with
Radon-Nikodym derivative given by (\ref{15pp}).\\
(2) $\mu_{y}$ is locally Lipschitz in the data $y$, with respect to the Hellinger distance: if $\mu_{y}$ and $\mu_{y'}$ are two measures with data $y$ and $y'$, then
for all $y$ and $y'$ with $\max\{|y|_{\Sigma},|y'|_{\Sigma}\}<r$, there exists a constant $C=C(r)$ such that
\begin{equation}\label{eq24}
 d_{Hell}(\mu_{y},\mu_{y'})\leq C|y-y'|_{\Sigma}.
\end{equation}
\end{theorem}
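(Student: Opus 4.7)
The plan is to apply the standard well-posedness machinery for Bayesian inversion in function space (in the spirit of Stuart and of Dashti--Stuart), whose hypotheses are precisely items (1)--(3) of Proposition \ref{prop1}. For part (1), the two things to verify are that $\Phi(\cdot;y)$ is $\Xi$-measurable and that the normalization constant $Z$ is strictly positive. Measurability follows by combining item (2) of Proposition \ref{prop1}, i.e. the $\mu_0$-almost sure continuity of $\Phi(\cdot;y)$, with the fact that $(\mathcal{X},\Xi,\mu_0)$ is a complete probability space; modifying $\Phi$ on a $\mu_0$-null set we obtain a Borel-measurable version. Positivity of $Z$ is immediate from the uniform upper bound $\Phi(\phi;y)\le K_1$ given by item (1): one has $Z = \int_{\mathcal{X}} e^{-\Phi(\phi;y)}\,\mu_{0}(d\phi)\ge e^{-K_{1}}>0$. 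Thus the infinite-dimensional Bayes rule applies and the Radon--Nikodym derivative is given by (\ref{15pp}).

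For part (2), I would estimate the Hellinger distance along the classical route. Let $Z$ and $Z'$ be the normalization constants associated with $y$ and $y'$ respectively; both lie in $[e^{-K_{1}},1]$. Using the elementary inequality $|e^{-a}-e^{-b}|\le|a-b|$ for $a,b\ge 0$ together with the local Lipschitz estimate of Proposition \ref{prop1}(3), the first step is
\begin{equation*}
|Z-Z'|\le\int_{\mathcal{X}}|\Phi(\phi;y)-\Phi(\phi;y')|\,\mu_{0}(d\phi)\le K_{2}|y-y'|_{\Sigma}.
\end{equation*}
Writing $\tfrac{d\mu_{y}}{d\mu_{0}}=Z^{-1}e^{-\Phi(\cdot;y)}$ and $\tfrac{d\mu_{y'}}{d\mu_{0}}=(Z')^{-1}e^{-\Phi(\cdot;y')}$, I would split $2 d_{Hell}(\mu_{y},\mu_{y'})^{2}$ into a ``change of integrand'' piece, controlled by $|e^{-\Phi(\phi;y)/2}-e^{-\Phi(\phi;y')/2}|\le\tfrac{1}{2}|\Phi(\phi;y)-\Phi(\phi;y')|$ and Proposition \ref{prop1}(3), and a ``change of normalization'' piece, controlled by $|Z^{-1/2}-(Z')^{-1/2}|$ together with the uniform lower bound on $Z,Z'$ and the bound on $|Z-Z'|$ above. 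Both contributions are of order $|y-y'|_{\Sigma}^{2}$, which yields (\ref{eq24}) with a constant $C=C(r)$.

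The substantive analytic work has already been carried out in Proposition \ref{prop1}: once $\mu_{0}$-almost sure continuity of $F$ (hence of $\mathcal{G}$) has been secured through Lemma \ref{prop2} together with the fact that level sets of the Whittle--Mat\'ern Gaussian field have Lebesgue measure zero, the remainder is routine Stuart--Dashti bookkeeping. The only subtlety I would want to be careful with is handling the $\mu_{0}$-null exceptional set on which $\Phi(\cdot;y)$ fails to be continuous, so that all integrals over $\mathcal{X}$ are unambiguously defined; this is a matter of passing to the $\mu_{0}$-completion of $\Xi$ rather than a genuine analytic difficulty, and I do not anticipate any serious obstacle beyond it.
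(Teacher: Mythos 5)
Your proposal is correct and follows essentially the same route as the paper: the paper verifies the hypotheses in Proposition \ref{prop1} and then simply cites Lemma 6.1 of \cite{Iglesias} for measurability, Theorem 6.29 of \cite{Stuart} for existence and absolute continuity, and Theorem 4.5 of \cite{Dashti} for the Hellinger--Lipschitz bound, which are exactly the arguments (measurability via almost-sure continuity on a complete space, $Z\geq e^{-K_{1}}>0$, and the change-of-integrand/change-of-normalization splitting) that you write out explicitly. The only difference is that you unfold the details the paper delegates to the references.
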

\begin{proof}
From the Proposition \ref{prop1} (2), we get that $\Phi(\cdot;y)$ is continuous $\mu_{0}$-almost surely. Using the $\mu_{0}$-almost surely continuity, it establishes the measurability in Lemma 6.1 (\cite{Iglesias}). Then, the first result follows from the Theorem 6.29 in \cite{Stuart}. For the Lipschitz continuity of the $\mu_{y}$, it could be proved by the Theorem 4.5 in \cite{Dashti}. Therefore, we omit the details here.
\end{proof}

\section{Numerical Experiments}
\label{sec:4}
In this section, some numerical results are presented to demonstrate performance of the proposed method. In particular, we compare the results of the Bayesian level set method with those of the regular Bayesian approach.
\subsection{Sampling Algorithm}
The Markov chain Monte Carlo (MCMC) methods are usually applied to draw the samples from the posterior distribution $\mu_{y}$ defined above. In this work, we employ the preconditioned Crank-Nicolson (pCN) algorithm \cite{Cotter}, which is described in Algorithm \ref{alg:A}. We adopt the proposal variance parameter $\beta\in (0, 1]$ to control the stepsize in numerical implementations. We take the choice of $\beta=0.007$ for a compromise between the acceptance rate and the exploration of the state space. The proposed pCN-MCMC algorithm is performed with samples $N_{s}=10^{4}$. We take the last $2\times10^{3}$ samples to compute the conditional mean (CM) estimates.
\begin{algorithm}[htb]
  \caption{The pCN-MCMC algorithm.}
  \label{alg:A}
  \begin{algorithmic}[1]
  \State Collect the scattered field measured data over all frequencies $k_{m}$, $m=1,\cdots, M$ and the incident direction $\textbf{d}_{j}$, $j=1,\cdots J$.
  \State Set $s=0$. Choose an initial state $\phi^{(0)}\in \mathcal{X}$.
   \For{$s=0$ to $N_{s}$}
      \State Propose $\psi^{(s)}=\sqrt{1-\beta^{2}}\phi^{(s)}+\beta\xi^{(s)}$,\quad $\xi^{(s)}\sim \mathcal{N}(0,\mathcal{C}_{\alpha,\tau})$;
      \State Draw $\theta\sim U[0,1]$
      \State Let $a(\phi^{(s)},\psi^{(s)}):=\min\{1,\exp(\Phi(\phi^{(s)})-\Phi(\psi^{(s)}))\}$;
      \If {$\theta\leq a$}
          \State $\phi^{(s+1)}= \psi^{(s)}$;
      \Else
          \State $\phi^{(s+1)} = \phi^{(s)}$;
      \EndIf
  \EndFor
  \end{algorithmic}
\end{algorithm}
\subsection{Data and Parameters}
 We consider the case of $R=1$ and discretize the domain with $16512$ elements uniformly with meshsize $h=2.45\times 10^{-2}$.  Meanwhile, we adopt a uniform mesh with meshsize $\widehat{h}=2h$ for the application of the pCN-MCMC algorithm. The synthetic data is generated by solving the forward model with the noise being added, and the data are measured on the boundary $\Gamma_{R}$. We assume that the noise is Gaussian, $\eta \sim \mathcal{N}(0,\gamma^{2}I)$, where $\gamma=0.005$. The number of incident directions $\textbf{d}_{j}$ is taken as $J=5$, and the incident directions $\textbf{d}_{j}$ are equally distributed around $\Gamma_{R}$ with $\textbf{d}_{1}=(1,0)$. The wavenumber varies from $k=0.5\pi$ to $k=2.5\pi$ with $M=6$.
\subsection{Numerical Results}
We test the regular Bayesian approach and the Bayesian level set method on the following examples. In both methods, the prior is taken to be a zero mean Gaussian with Mat\'{e}rn covariance, i.e. $\mathcal{N}(0,\mathcal{C}_{\alpha,\tau})$.

\textbf{Example 1.} The true scatterer has the form of
 \begin{equation}
q^{\dag}(x)=\left\{
             \begin{array}{lr}
             1, \quad x\in D  ,&\\
             0, \quad x\in B_{R}\backslash D  ,&
             \end{array}
\right.
\end{equation}
where $D=\{(x,y)\in \mathbb{R}^{2}:x^2+(y-\sqrt[3]{x^{2}})^2\leq 1\}$ is a love-shaped scatterer, as shown in Fig. \ref{fig5}. In the level set context, we parameterize $D$ in terms of the level set function given by $D=\{x\in D|\phi(x)\geq0\}$, and the corresponding level set map is $F(\phi)=\mathbb I_{D}$. Fixing $\alpha=3$, we apply the regular Bayesian approach and the Bayesian level set method to recover the shape of the scatterer with different inverse length scales, respectively. The reconstructed results are presented in Fig. \ref{fig6}. One can see from Fig. \ref{fig6} that both methods are effective to recover the shape of the scatterer. However, compared to the regular Bayesian approach, the Bayesian level set method shows the advantage of identifying the sharp boundary of the scatterer. One of the reasons is that we are able to make good use of the information on $q(x)$ under the framework of Bayesian level set method.

\begin{figure}[htp]
  \centering
  \begin{subfigure}
  \centering
  \includegraphics[width=1.6in,height=1.4in]{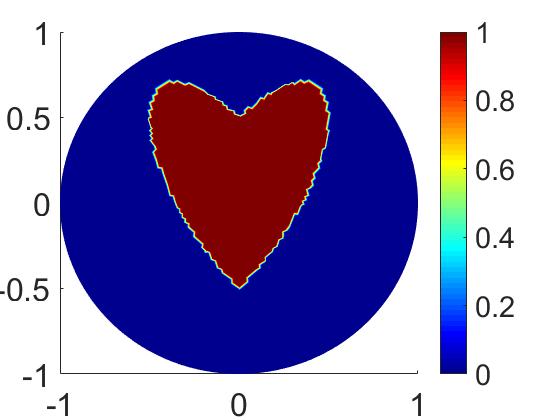}
  \end{subfigure}
 \caption{The true scatterer $q^{\dagger}$}
  \label{fig5}
\end{figure}
\begin{figure}[htp]
  \centering
  \begin{subfigure}
  \centering
  \includegraphics[width=1.6in,height=1.4in]{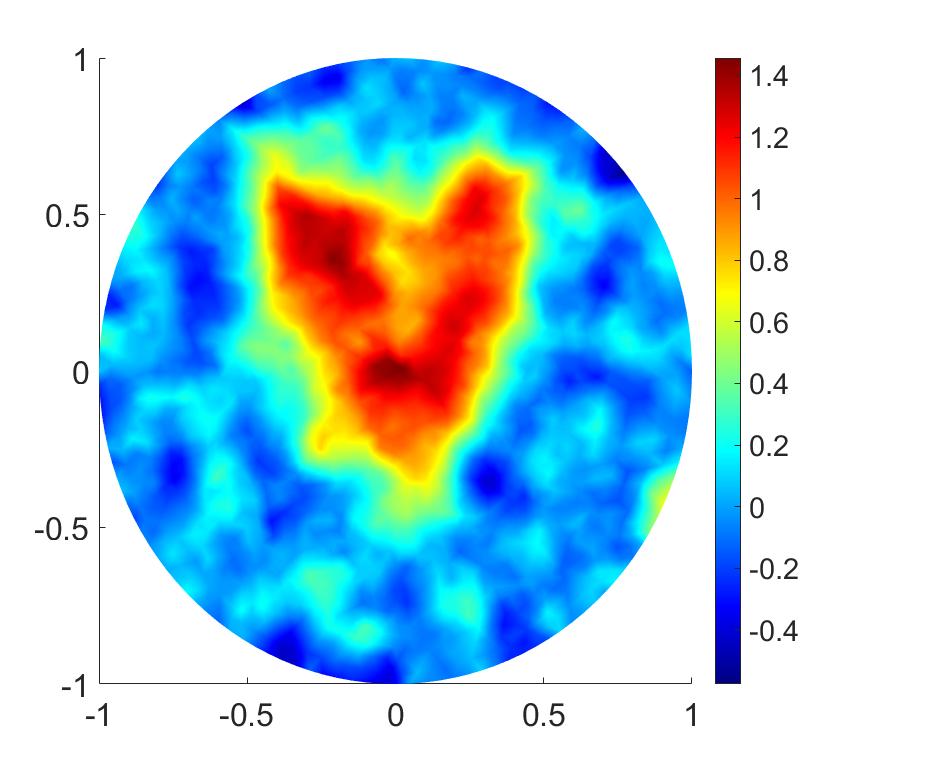}
  \end{subfigure}
  \begin{subfigure}
  \centering
  \includegraphics[width=1.6in,height=1.4in]{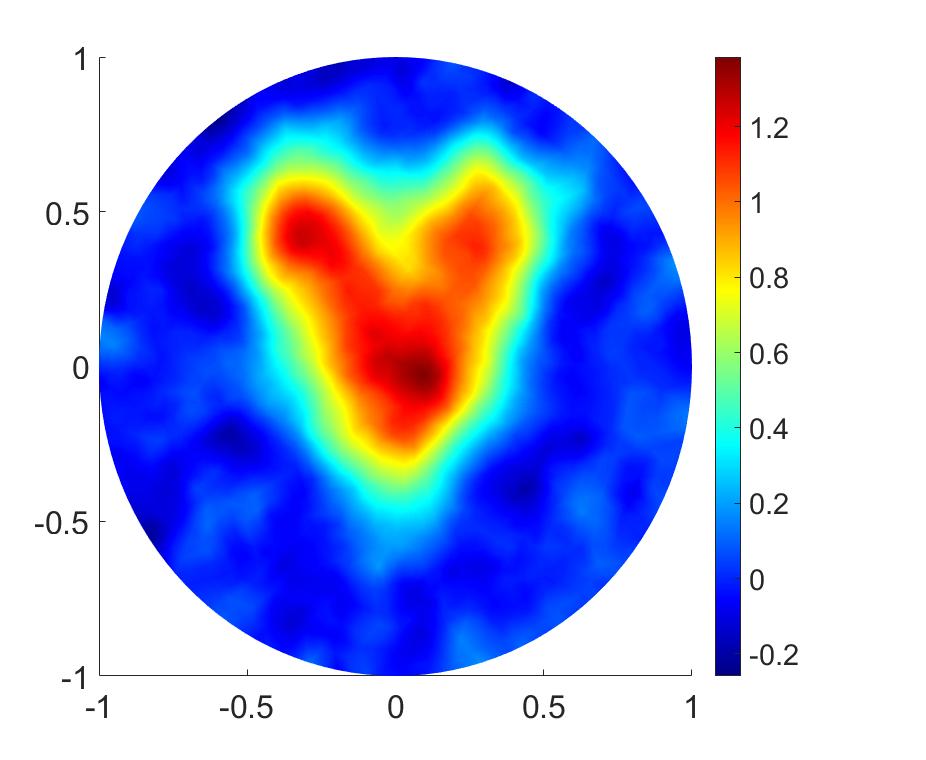}
  \end{subfigure}
  \begin{subfigure}
  \centering
  \includegraphics[width=1.6in,height=1.4in]{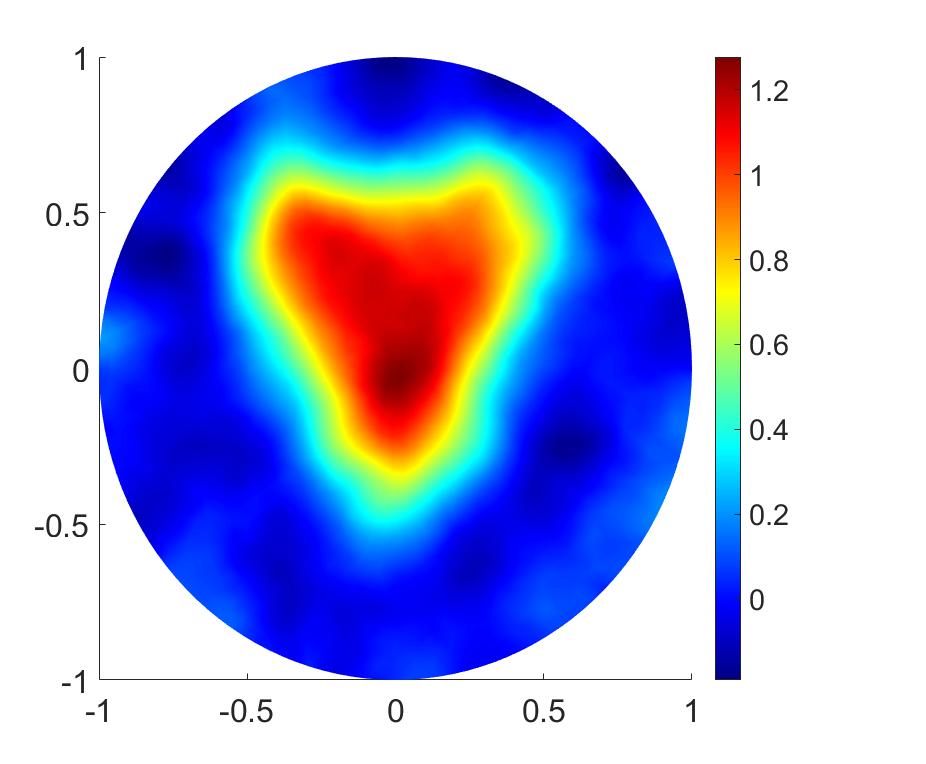}\\
  \end{subfigure}
  \begin{subfigure}
  \centering
  \includegraphics[width=1.6in,height=1.4in]{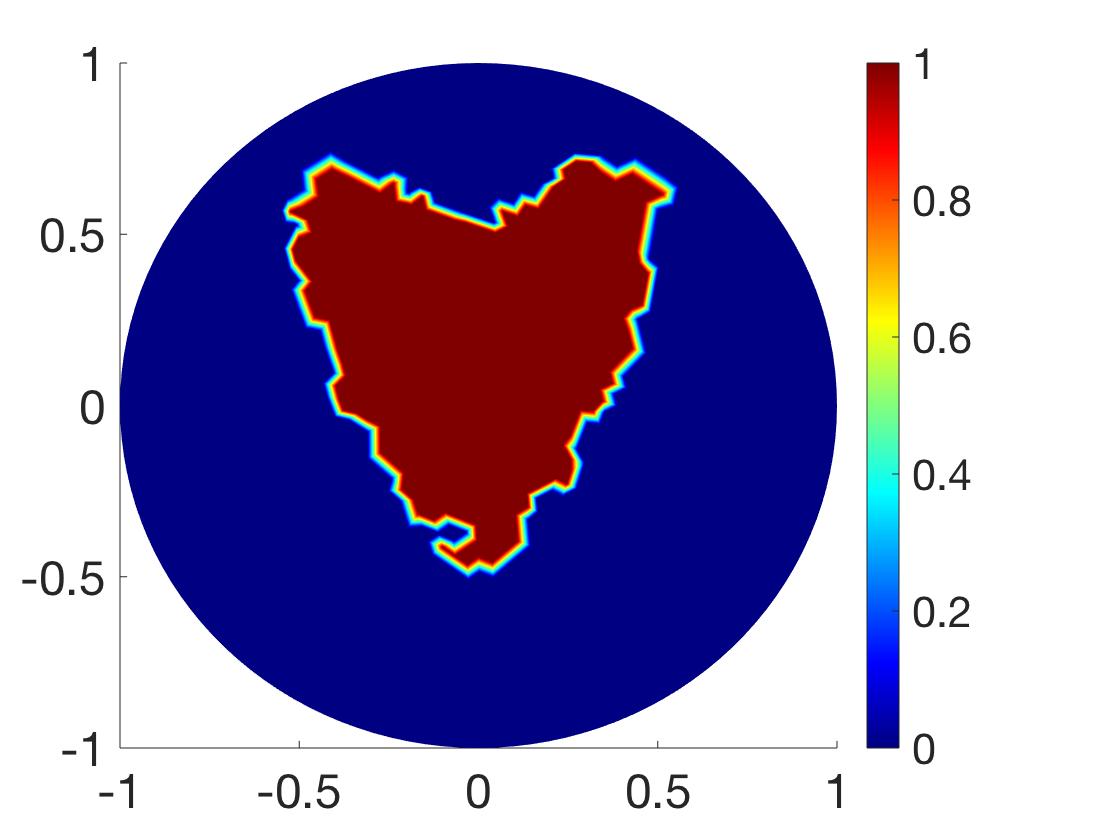}
  \end{subfigure}
  \begin{subfigure}
  \centering
  \includegraphics[width=1.6in,height=1.4in]{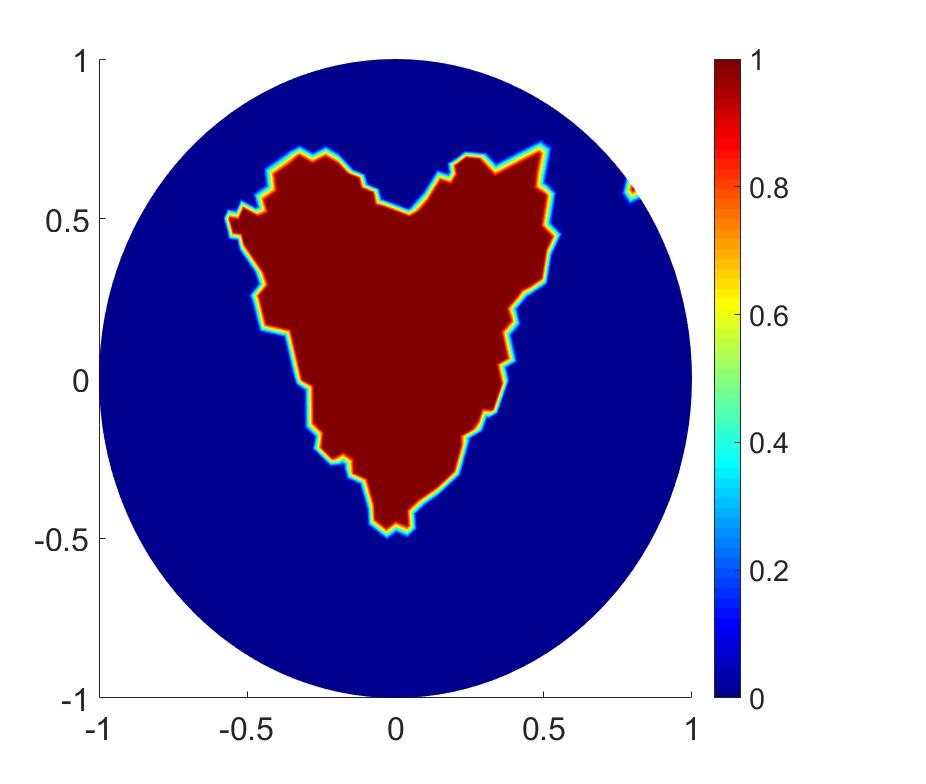}
  \end{subfigure}
  \begin{subfigure}
  \centering
  \includegraphics[width=1.6in,height=1.4in]{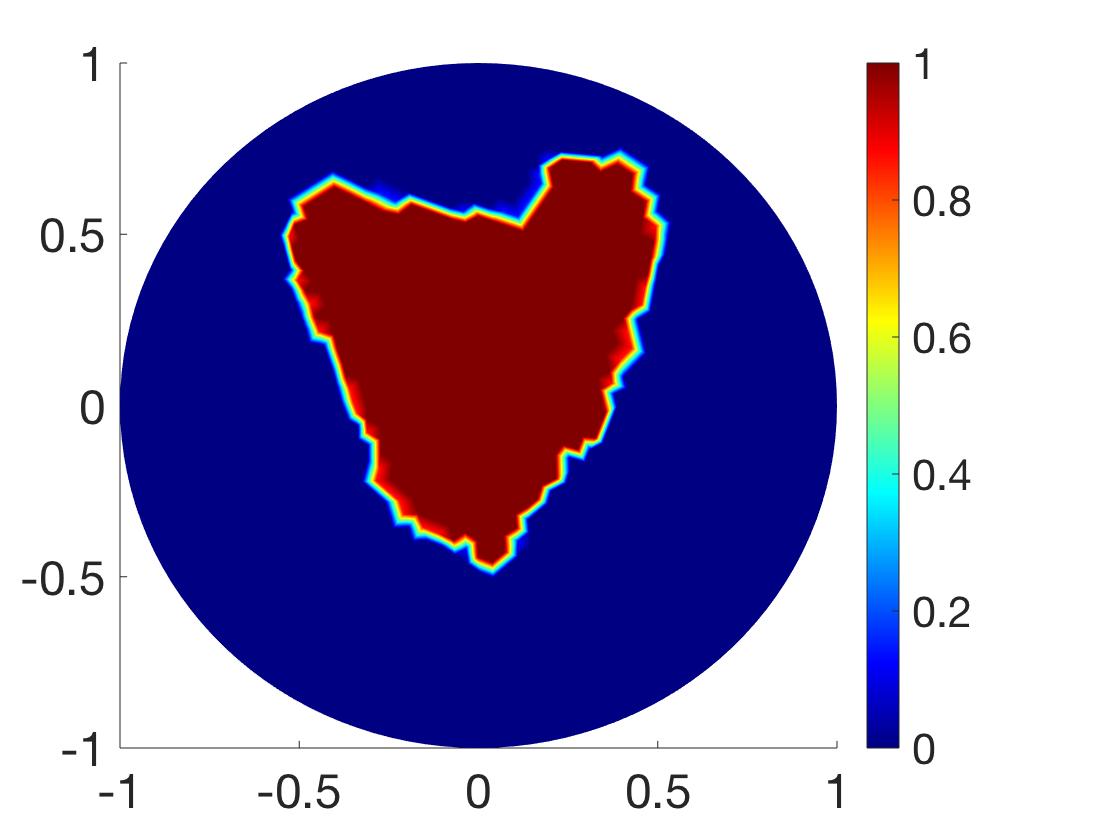}\\
  \end{subfigure}
 \caption{The reconstructions of a love-shaped scatterer for the regular Bayesian method (top block) and the Bayesian level set method (bottom block) with $\tau=10, \frac{20}{3}, 5$, $\alpha=3$.}\label{fig6}
\end{figure}

\textbf{Example 2.} The expression of the scatterer is the same as that of the Example 1, where D is a cross-shaped scatterer plotted in Fig. \ref{fig7}. We define the level set map $(F\phi)(x)=\mathbb I_{D}$ with $D=\{x\in D|\phi(x)\geq0\}$, and the zero level set presents the unknown boundary $\partial D$. Fixing the $\alpha=2$, we display the reconstructed results in Fig. \ref{fig8}. It can be observed that the Bayesian approach coupled with the level set method is a better alternative to recover the boundary of the scatterer.
\begin{figure}[htp]
  \centering
  \begin{subfigure}
  \centering
  \includegraphics[width=1.6in,height=1.4in]{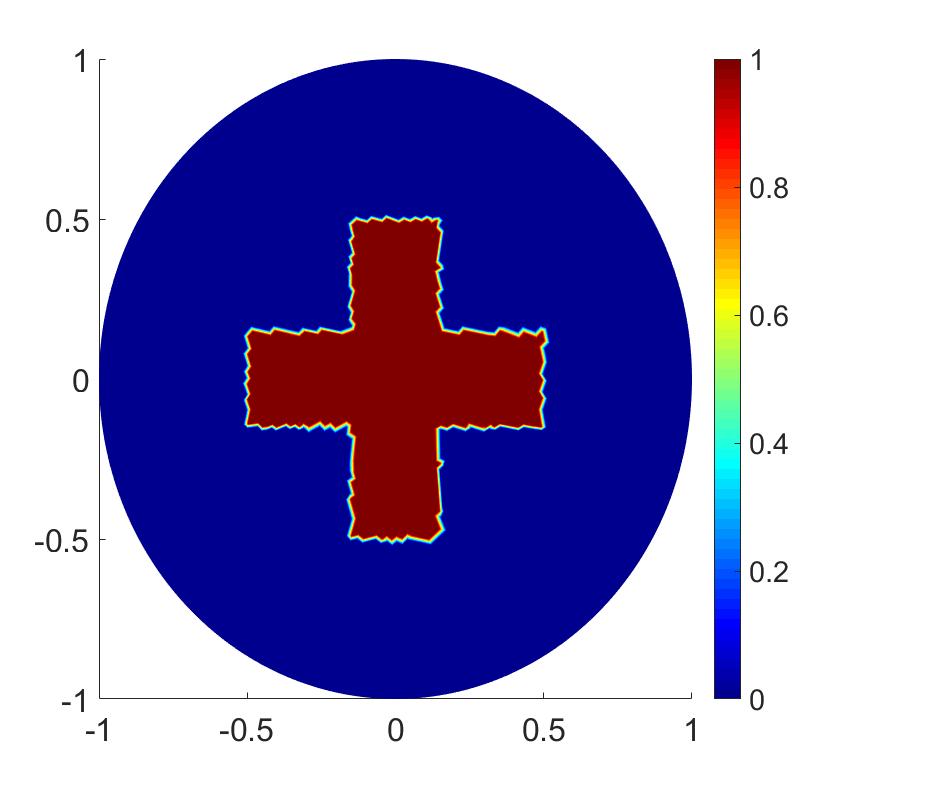}
  \end{subfigure}
 \caption{The true scatterer $q^{\dagger}$}
  \label{fig7}
\end{figure}

\begin{figure}[htp]
  \centering
  \begin{subfigure}
  \centering
  \includegraphics[width=1.6in,height=1.4in]{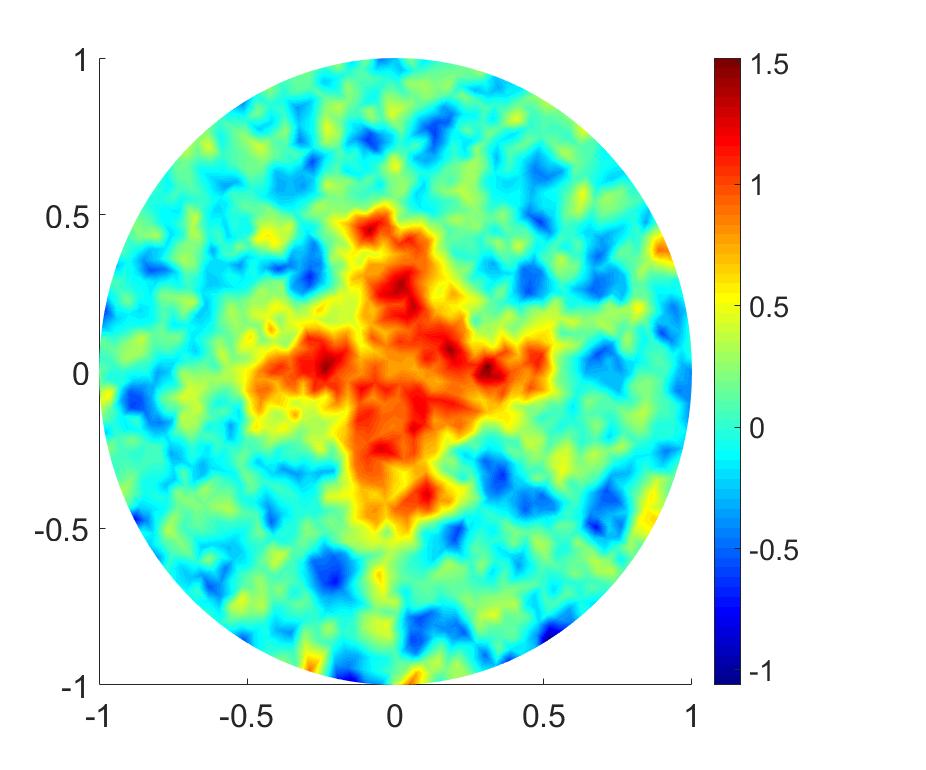}
  \end{subfigure}
  \begin{subfigure}
  \centering
  \includegraphics[width=1.6in,height=1.4in]{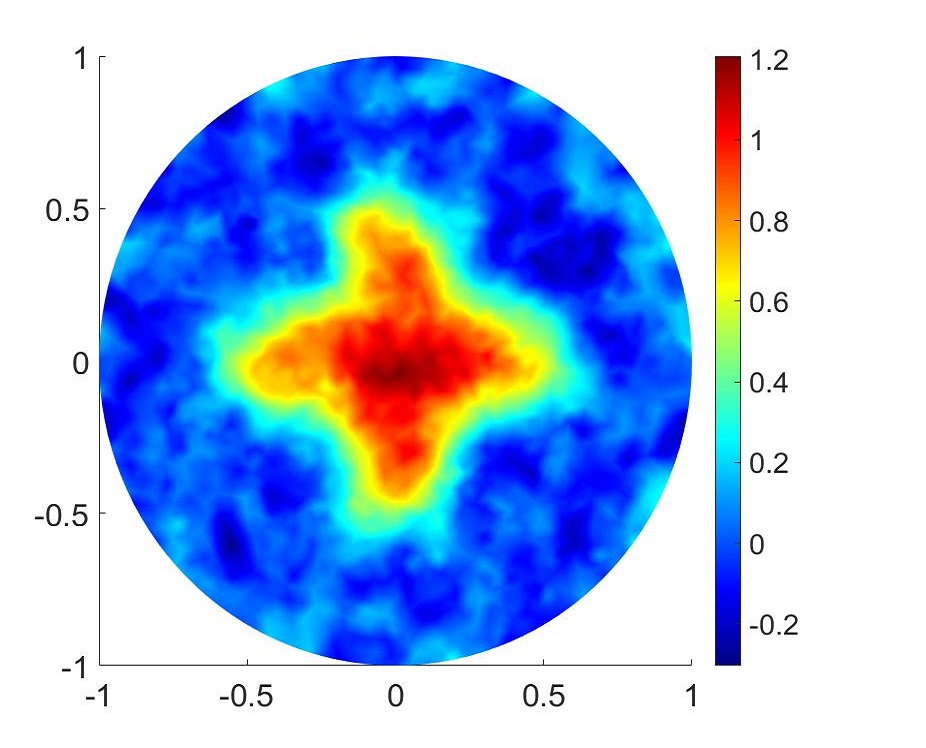}
  \end{subfigure}
  \begin{subfigure}
  \centering
  \includegraphics[width=1.6in,height=1.4in]{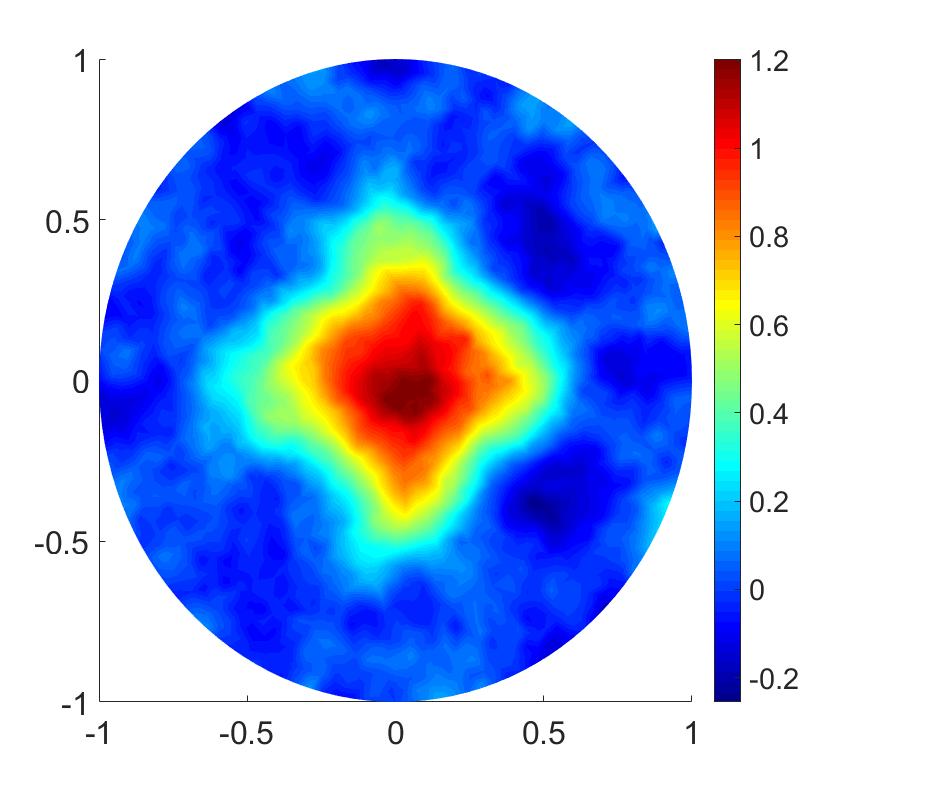}\\
  \end{subfigure}
  \begin{subfigure}
  \centering
  \includegraphics[width=1.6in,height=1.4in]{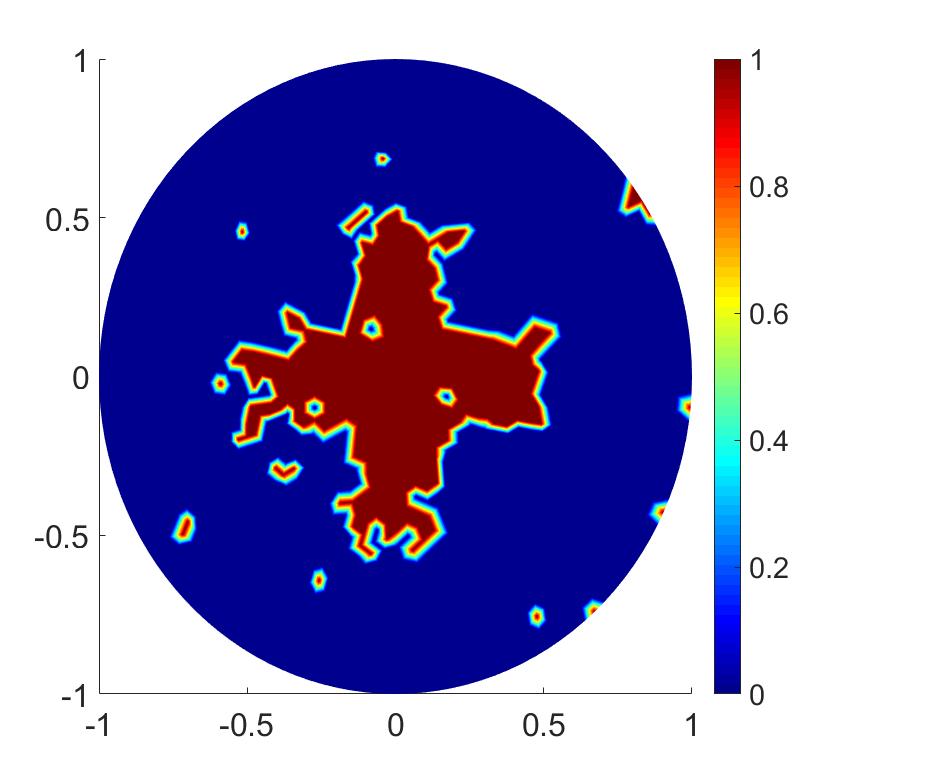}
  \end{subfigure}
  \begin{subfigure}
  \centering
  \includegraphics[width=1.6in,height=1.4in]{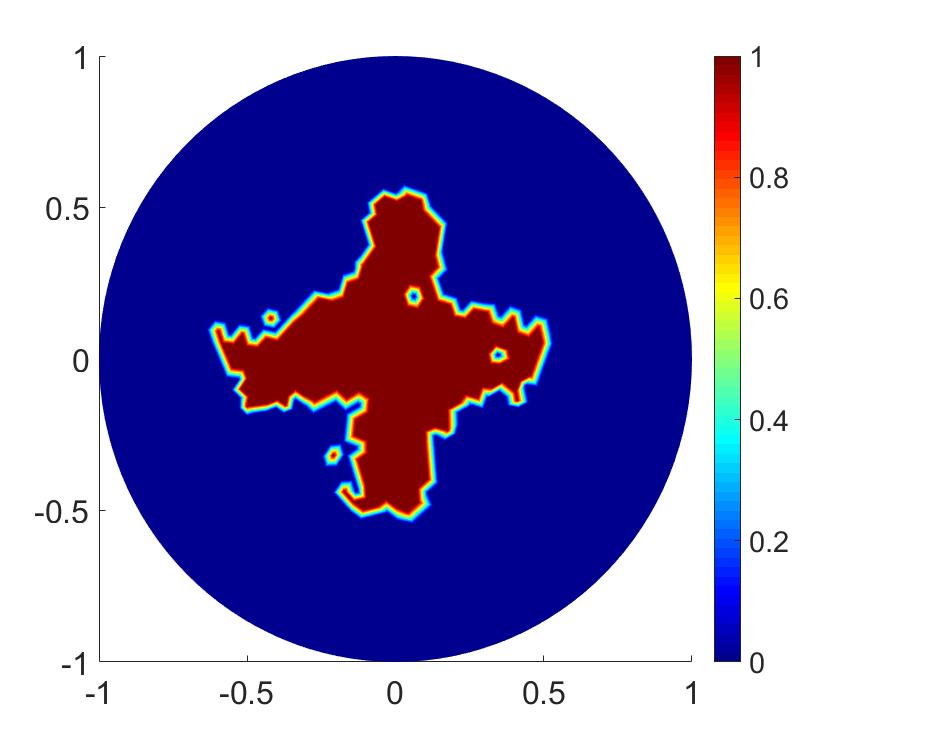}
  \end{subfigure}
   \begin{subfigure}
  \centering
  \includegraphics[width=1.6in,height=1.4in]{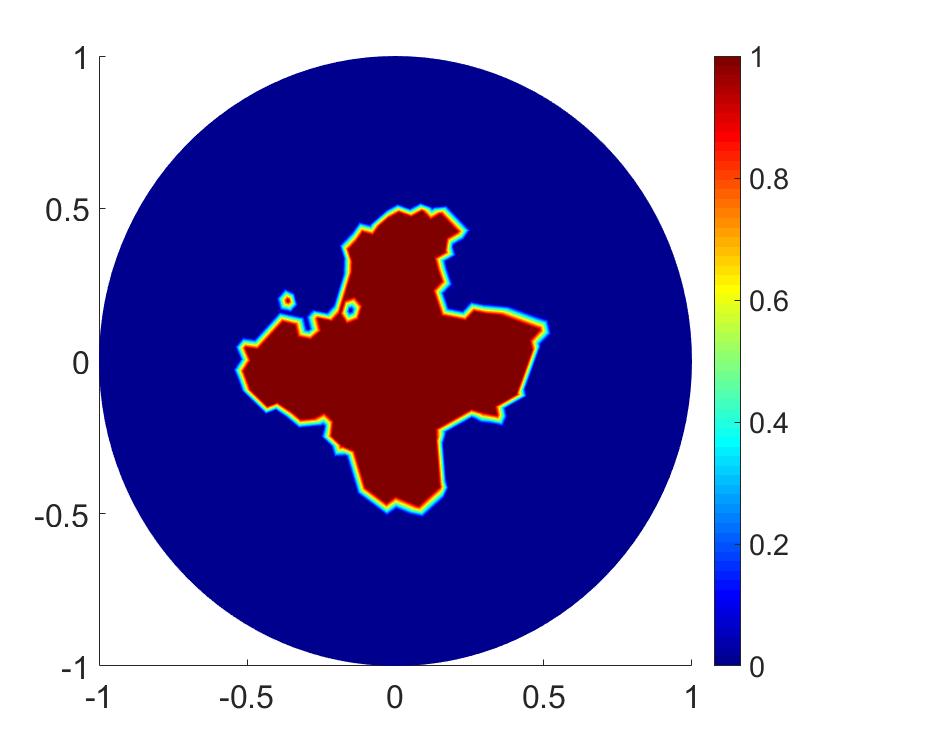}\\
  \end{subfigure}
 \caption{ The reconstructions of a cross-shaped scatterer for the regular Bayesian method (top block) and the Bayesian level set method (bottom block) with $\tau=10, 5, \frac{10}{3}$, $\alpha=2$.}\label{fig8}
\end{figure}

\textbf{Example 3.} The scatterers are two disjoint domains satisfying
\begin{equation}
q^{\dagger}(x)=\left\{
             \begin{array}{lr}
             3, \quad x\in D_{1}\, \text{or}\, D_{2}  &\\
             0, \quad x\in B_{R}\backslash (D_{1}\cup D_{2}),  &
             \end{array}
\right.
\end{equation}
where $D_{1}=\{(x,y)\in \mathbb{R}^{2}:(x+0.3)^2+(y-0.3)^2\leq0.04\}$ and $D_{2}=\{(x,y)\in \mathbb{R}^{2}:(x-0.3)^2+(y+0.3)^2\leq0.04\}$ are shown in Fig. \ref{fig9}. For the Bayeian level set method, $D$ is characterized by the level set function, and the corresponding level set map is $F(\phi)=3\mathbb I_{D_{1}}+3\mathbb I_{D_{2}}$ with $D_{i}=\{x\in D_{i}|\phi(x)\geq0\}, i=1, 2$. We take $\alpha=3$, and show the posterior samples with inverse length scale $\tau=10, \frac{20}{3}, 5$ in Fig. \ref{fig10}. As we can see from the results, the Bayesian level set method is more suitable in identifying the boundary of the scatterer.

\begin{figure}[htp]
  \centering
  \begin{subfigure}
  \centering
  \includegraphics[width=1.6in,height=1.4in]{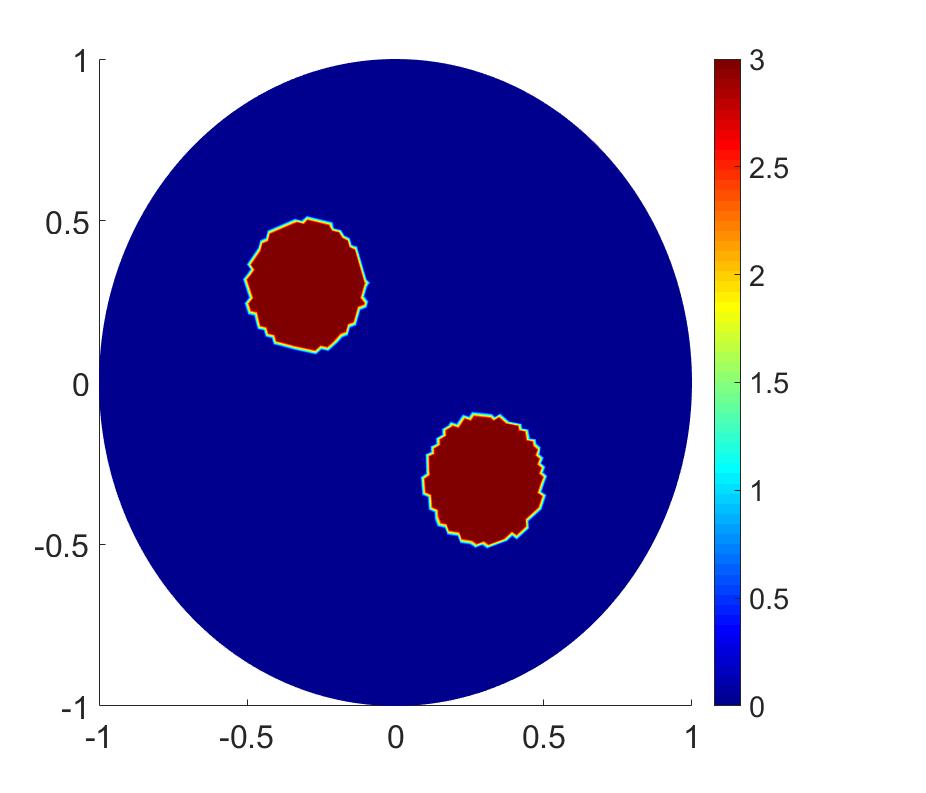}
  \end{subfigure}
 \caption{The true scatterer $q^{\dagger}$}
  \label{fig9}
\end{figure}

\begin{figure}[htp]
  \centering
   \begin{subfigure}
  \centering
  \includegraphics[width=1.6in,height=1.4in]{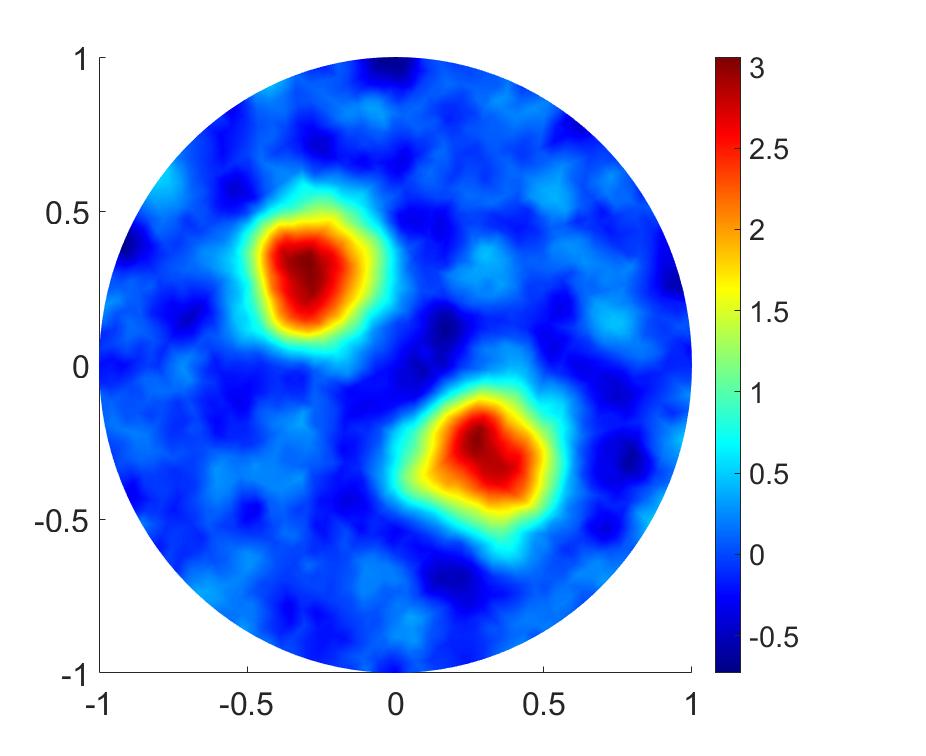}
  \end{subfigure}
  \begin{subfigure}
  \centering
  \includegraphics[width=1.6in,height=1.4in]{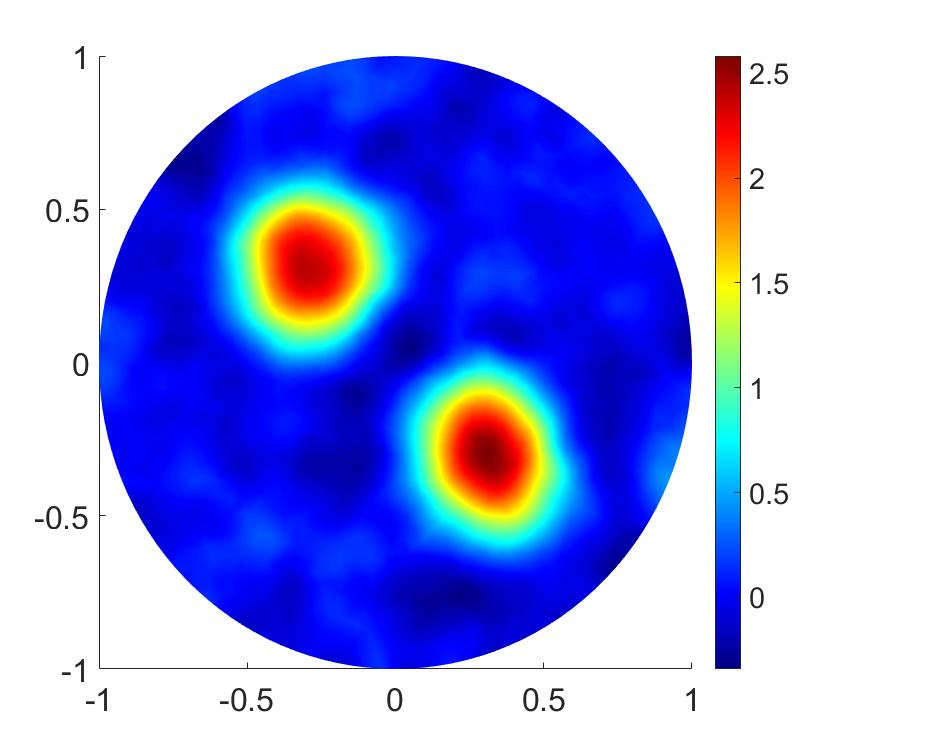}
  \end{subfigure}
  \begin{subfigure}
  \centering
  \includegraphics[width=1.6in,height=1.4in]{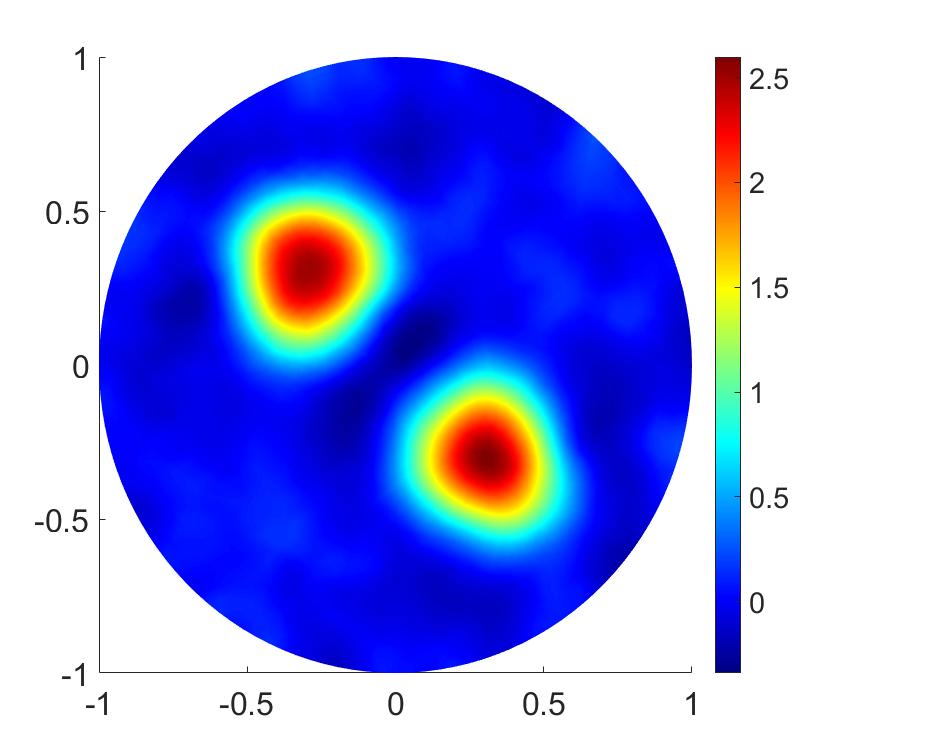}\\
  \end{subfigure}
  \begin{subfigure}
  \centering
  \includegraphics[width=1.6in,height=1.4in]{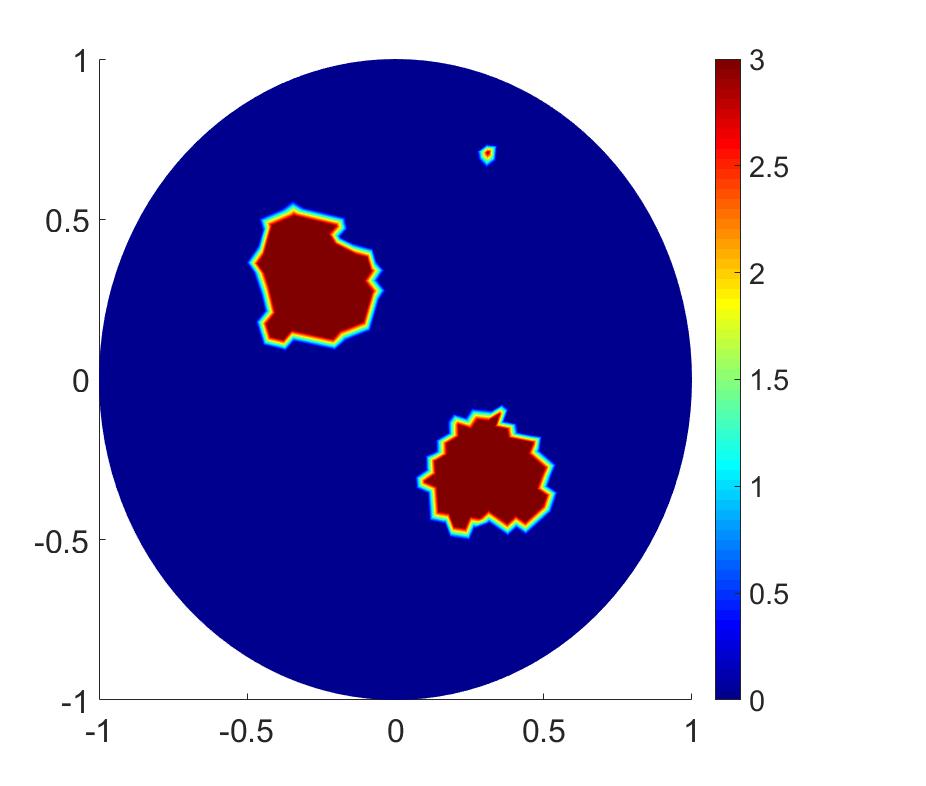}
  \end{subfigure}
  \begin{subfigure}
  \centering
  \includegraphics[width=1.6in,height=1.4in]{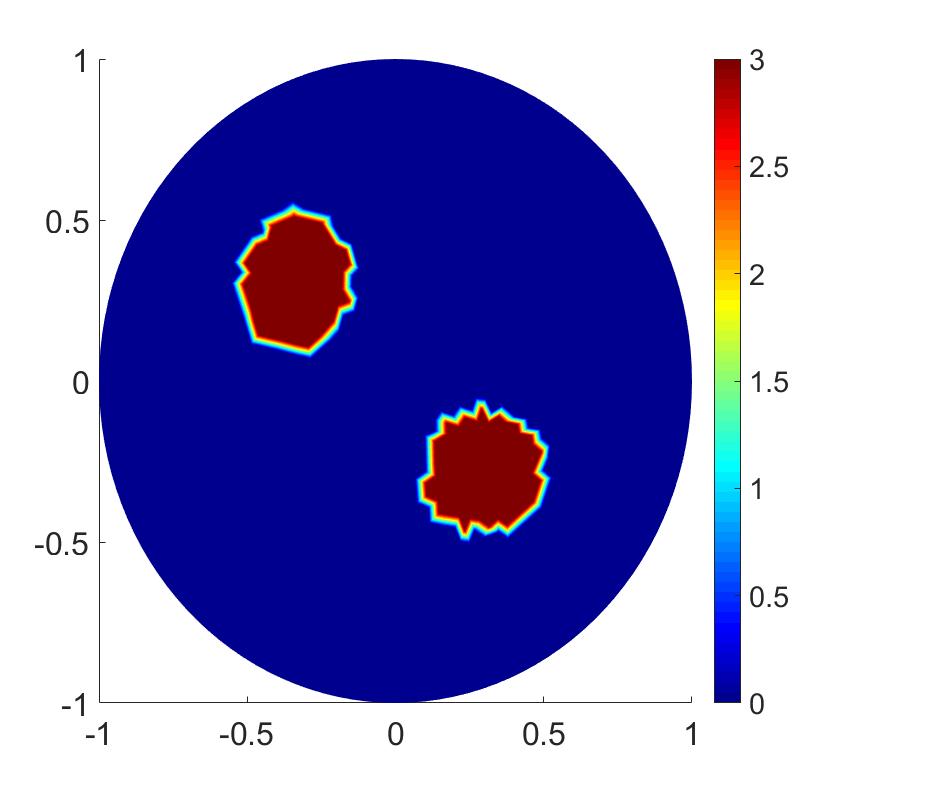}
  \end{subfigure}
  \begin{subfigure}
  \centering
  \includegraphics[width=1.6in,height=1.4in]{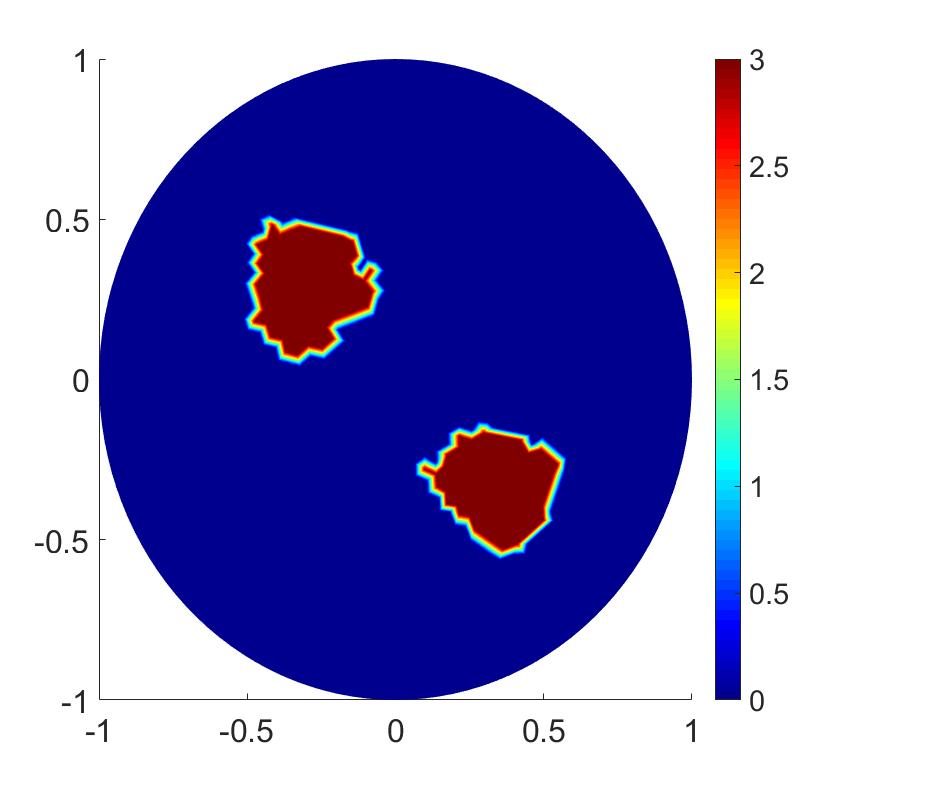}\\
  \end{subfigure}
 \caption{The reconstruction of two circle-shaped scatterers for the regular Bayesian method (top block) and the Bayesian level set method (bottom block) with $\tau=10, \frac{20}{3}, 5$, $\alpha=3$.}\label{fig10}
\end{figure}

\section*{Acknowledgments} The work of DZL is partially supported by the grants (NSFC-11601067, NSFC-117710680), and the work of XLW is partially supported by the grant (NSFC-117710680).


\bibliographystyle{AIMS}
\bibliography{Reference}

\medskip
Received xxxx 20xx; revised xxxx 20xx.
\medskip

\end{document}